\documentclass[a4paper,12pt,twoside]{article}

\usepackage{doi}

\usepackage[dvipsnames]{xcolor} 

\usepackage{hyperref} 
\hypersetup{breaklinks, colorlinks,
    linkcolor = {Blue},
    citecolor = {Blue},
    urlcolor  = {Blue}
}

\usepackage{float}

\usepackage{amsmath,amssymb}
\usepackage{mathtools} 
\usepackage{alphabeta} 

\usepackage{amsthm, 
}

\usepackage[numbers,sort&compress]{natbib}
\bibliographystyle{abbrvnat}



\usepackage[text={6.5in,9.5in},centering]{geometry}

\usepackage[inline,shortlabels]{enumitem} 
\renewlist{enumerate}{enumerate}{2}
\setlist[enumerate,1]{label=\textup{(\alph*)},
ref={\alph*}, align=left, labelsep=0.5ex, leftmargin=*}
\setlist[enumerate,2]{label=\textup{({\roman*})},
ref={\roman*}, align=right, labelsep*=1ex, widest={(ii)},  
leftmargin=5.4ex}


\DeclareSymbolFont{bbold}{U}{bbold}{m}{n}
\DeclareMathSymbol{\bbS}{\mathord}{bbold}{83}      



\vfuzz2pt 
\hfuzz2pt 



\theoremstyle{plain}
\newtheorem{theorem}{Theorem}[section]
\newtheorem{corollary}{Corollary}[section]
\newtheorem{lemma}{Lemma}[section]

\newtheoremstyle{boldremex}
    {\dimexpr\topsep/2\relax} 
    {\dimexpr\topsep/2\relax} 
    {}          
    {}          
    {\bfseries} 
    {.}         
    {.5em}      
    {}          
    
\theoremstyle{boldremex}
\newtheorem{remark}{Remark}[section]


\makeatletter
\renewenvironment{proof}[1][\proofname]{%
   \par\pushQED{\qed}\normalfont%
   \topsep6\p@\@plus6\p@\relax
   \trivlist\item[\hskip\labelsep\bfseries#1\@addpunct{.}]%
   \ignorespaces
}{%
   \popQED\endtrivlist\@endpefalse
}
\makeatother

\newcommand{\conc}[2]{{{{\mathrm{Conc}}}(#1\,;\,#2)}}


\newcommand{\CC}{\mathbb{C}}
\newcommand{\RR}{\mathbb{R}}
\newcommand{\NN}{\mathbb{N}}
\newcommand{\ZZ}{\mathbb{Z}}
\newcommand{\Zpl}{\ZZ_+}

\newcommand{\Comp}{\mathrm{Comp}}
\newcommand{\Part}{\mathrm{Part}}

\newcommand{\set}[1]{\underline{#1}}


\newcommand{\dd}{{\mathrm{d}}}
\newcommand{\ee}{{\mathrm{e}}}

\newcommand{\pii}{{\text{\pi}}}
 

\newcommand{\abs}[1]{|#1|}                   


\newcommand{\ABS}[1]{\Bigl|#1\Bigr|}           

\newcommand{\Be}{\mathrm{Be}}

\newcommand{\bigast}{\mathop{\hbox{\Large$\ast$}}}

\newcommand{\bfcdot}{{\boldsymbol{\cdot}}}

\newcommand{\card}[1]{|#1|}                 

\newcommand{\dirac}{\delta}                 

\newcommand{\dtv}[2]{d_{\mathrm{TV}}(#1,#2)}

\newcommand{\EE}{\mathrm{E}\,} 

\newcommand{\floor}[1]{{\lfloor #1 \rfloor}} 

\newcommand{\gnhaf}{\mathrm{gnhaf}}

\newcommand{\haf}{\mathrm{haf}}

\newcommand{\Subs}[2]{\bbS(#1,#2)} 

\newcommand{\Var}{\mathrm{Var}\,} 


\newcommand{\binomial}[2]{\genfrac{(}{)}{0pt}{}{#1}{#2}}


\pagestyle{myheadings}
\markboth{}{Smoothness and L\'{e}vy concentration function 
inequalities}
\scrollmode

\flushbottom
\begin{document}
\title{Smoothness and L\'{e}vy concentration function inequalities
for distributions of random diagonal sums}
\author{
Bero Roos\footnote{
Postal address:
FB IV -- Dept.\ of Mathematics, 
University of Trier, 
54286 Trier, Germany. 
E-mail: \texttt{bero.roos@uni-trier.de}}\medskip\\
University of Trier 
\date{
}
}
\maketitle
\begin{abstract}
We present new explicit upper bounds for the smoothness of the 
distribution of the random diagonal sum
$S_n=\sum_{j=1}^nX_{j,\pi(j)}$ 
of a random $n\times n$ matrix $X=(X_{j,r})$, where the $X_{j,r}$
are  independent integer valued random variables, and
$\pi$ denotes a uniformly distributed random permutation on 
$\{1,\dots,n\}$ independent of $X$. 
As a measure of smoothness, we consider the total variation distance 
between the distributions of $S_n$ and $1+S_n$. 
Our approach uses a new auxiliary inequality for a generalized 
normalized matrix hafnian, which could be of independent interest. 
This approach is also used to prove upper bounds of the L\'{e}vy 
concentration function of $S_n$ in the case of independent real valued random 
variables $X_{j,r}$. 
\medskip\\
\textbf{Keywords:} 
generalized hafnian; 
Hoeffding permutation statistic; 
L\'{e}vy concentration function inequality; 
random diagonal sum; 
smoothness inequality
\medskip \\
\textbf{2020 Mathematics Subject Classification:}  
60F05;  
62E17.  
 
\end{abstract}
\section{Introduction and main result} \label{s257568}
Smoothness estimates and L\'{e}vy concentration function bounds
(the latter sometimes also called anti-concentration bounds) for probability 
distributions are often useful in the proofs of distributional 
approximations or limit theorems,
e.g.\ see 
\citet{MR0331448},
\citet{MR636780}, 
\citet{MR974089}, 
\citet{MR1368759}, 
\citet{MR1353441},
\citet[Proposition 4.6]{MR1716120}, 
\citet[Proposition 2.4]{MR3322321} and 
\citet[Lemma 4.1]{MR3647296}. 
 
As a measure of smoothness of the distribution $R$ of 
a random variable $Y$ with values in the set of integers $\ZZ$, we use 
the total variation distance between $Y$ and $1+Y$, namely
\begin{align*}
\dtv{Y}{1+Y}=\dtv{R}{\dirac_1*R}.
\end{align*}
Here $\dirac_x$ is the Dirac measure at point $x\in\RR$, where 
$\RR$ is the set of real numbers, $*$ denotes convolution, and   
\begin{align*}
\dtv{Y_1}{Y_2}
:=\dtv{R_1}{R_2}
:=\sup_{A\subseteq\ZZ}|R_1(A)-R_2(A)| 
\end{align*}
is the total variation distance between the
probability distributions $R_1$ and $R_2$ 
of the $\ZZ$-valued random variables $Y_1$ and $Y_2$, respectively. 
We note that $\dtv{R_1}{R_1}=\frac{1}{2}\|R_1-R_2\|$, where 
$\|\,\bfcdot\,\|$ denotes the total variation norm defined on the 
space of all finite signed measures on the power set of $\ZZ$. 

On the other hand, we consider the L\'{e}vy concentration function of 
the distribution $R$ of an $\RR$-valued random variable $Y$ given by
\begin{align*}
\conc{Y}{t}
:=\conc{R}{t}
:=\sup_{x\in\RR}R([x,x+t])
\quad \text{ for }t\in[0,\infty). 
\end{align*}

We need some further notation. 
For two sets $A$ and $B$, let $B^A=\{f\,|\,f:\,A\longrightarrow B\}$ 
be the set of all maps from $A$ to $B$ and 
$B_{\neq}^A=\{f\in B^A\,|\,f\text{ is injective}\}$.  
For $k,\ell\in\NN=\{1,2,3,\dots\}$, we write $\set{k}=\{1,\dots,k\}$,
$B^k=B^{\set{k}}$, $B^{k\times \ell}=B^{\set{k}\times \set{\ell}}$
and $B_{\neq}^k=B_{\neq}^{\set{k}}$. 
We write $f_j=f(j)$ for $f\in B^A$ and $j\in A$ 
and $f=(f_1,\dots, f_k)$ in the case $A=\set{k}$ for $k\in\NN$. 
In particular, for $k,\ell\in\NN$ with $k\leq \ell$, we have 
\begin{align*}
\set{\ell}_{\neq}^k
&=\{(j_1,\dots,j_k)\,|\,j_1,\dots,j_k\in\set{\ell} 
\mbox{ pairwise distinct}\}
\end{align*}
and $\set{k}_{\neq}^k$ is the set of all permutations on $\set{k}$.

Let $n\in\NN$ with $n\geq 2$ and $m=\floor{\frac{n}{2}}\in\NN$. 
Here, for $x\in\RR$, $\floor{x}$ is the largest integer $\leq x$. 
Let 
$X=(X_{j,r})$ be a random matrix in $\RR^{n\times n}$
such that the entries $X_{j,r}$ are independent random variables.
The distribution $P^{X_{j,r}}$ of $X_{j,r}$ is also denoted by 
$Q_{j,r}$.
Let $\pi$ be a random permutation on $\set{n}$ 
independent of $X$ and uniformly 
distributed on $\set{n}_{\neq}^n$. 
We call $(X_{1,\pi(1)},X_{2,\pi(2)},\dots,X_{n,\pi(n)})$ 
a (generalized) random diagonal of $X$ and 
\begin{align*}
S_n=\sum_{j=1}^nX_{j,\pi(j)}
\end{align*}
the corresponding random diagonal sum. In the recent literature, 
this is sometimes also called Hoeffding permutation statistic
(e.g.\ \citet{MR3920366}) 
or Hoeffding statistic (e.g.\ \citet{MR4372142}) in the case that  
the $X_{j,r}$ are constants. 

We note that the distribution 
\begin{align*}
P^{S_n}
&=\frac{1}{n!}\sum_{r\in\set{n}_{\neq}^n}\bigast_{j=1}^nQ_{j,r(j)}
\end{align*}
of $S_n$ remains the same 
and that the statements in Theorems \ref{t287432}--\ref{t98374587} 
below do not change if we weaken the independence assumption on the 
$X_{j,r}$ in such a way that, for each permutation
$r\in\set{n}_{\neq}^n$, the random variables in the  
diagonal $(X_{1,r(1)},X_{2,r(2)},\dots,X_{n,r(n)})$ are independent. 
For instance, this holds if the familiy of all columns 
(or all rows) of $X$ is independent. 

Sums like $S_n$ have important applications in the theory of rank tests 
(e.g.\ \citet{MR1680991}) and discrete probability 
(e.g.\ \citet[Chapter 4]{MR1163825}). 
Central limit theorems for $S_n$ under various conditions
in the case of constants $X_{j,r}$ have been 
obtained by \citet{MR11424}, \citet{MR44058}, \citet{MR89560}, 
and others. Berry-Esseen theorems can also be found in various papers,
see \citet{MR751577}, \citet{MR3322321}, \citet{MR4466417} 
and the references therein. 
The Poisson approximation in the case of Bernoulli random variables 
or integer valued constants $X_{j,r}$ has been investigated in 
\citet{MR385977}, \citet[Theorem 4.A and Remark 4.1.3]{MR1163825} 
and \citet{MR3920366}. 

In what follows, let  
\begin{align*}
R(j,k,r,s)
&=\frac{1}{2}(Q_{j,r}*Q_{k,s}+Q_{k,r}*Q_{j,s})
\quad\text{ for } (j,k),(r,s)\in\set{n}_{\neq}^2. 
\end{align*}
In other words, $R(j,k,r,s)$ is the distribution of 
$(1-I) (X_{j,r}+X_{k,s})+I(X_{k,r}+X_{j,s})$, where 
$I$ is a Bernoulli random variable with success probability 
$1/2$ and independent of
$X_{j,r}$, $X_{k,s}$, $X_{k,r}$, and $X_{j,s}$. 
For $\alpha\in(0,\infty)$, let 
\begin{align*} 
C(\alpha)=\sup_{x\in[0,\infty)}
\frac{\sqrt{\alpha+x}}{1+x}
\Big(\frac{x}{\sqrt{\alpha+1+x}}+\frac{1}{\sqrt{\alpha}}\Big). 
\end{align*}
\begin{remark}\label{r297582}
We note that $C(\alpha)$ coincides with 
$\widetilde{C}(\alpha,\frac{1}{2})$ in Corollary \ref{c9724722} below. 
As can easily be shown, it can be calculated numerically by using the 
fact that a corresponding maximizer $x$ satisfies
\begin{align*}
\lefteqn{(1+a)^2(1-3a)
+(1+a)(1-9a-6a^2)x}\\
&\quad{}
-(2+15a+15a^2+3a^3)x^2
-(2+10a+5a^2)x^3
+(1-2a)x^4
+x^5
=0.
\end{align*}
For instance, $C(\frac{1}{8})=1.9241\dots$ 
and $C(\frac{1}{4})=1.5593\dots$, where the 
maximizers are equal to 
$1.97329\dots$ and $2.544854\dots$, respectively. 
Remark \ref{r3286375} below implies that
$C(\alpha)$ is decreasing in $\alpha\in(0,\infty)$ and satisfies
$1\leq C(\alpha)\leq 1+\frac{1}{2\alpha}$. 
\end{remark}

The following three theorems form the main results of this paper. 
The proofs are given in Section \ref{s2864583}.
We recall that $m=\floor{\frac{n}{2}}$. 
\begin{theorem}\label{t287432}
Let the $X_{j,r}$ for $j,r\in\set{n}$ 
be $\ZZ$-valued, $\varepsilon\in(0,1]$, and
\begin{gather}
\nu(j,k,r,s)
:=1-\dtv{R(j,k,r,s)}{\dirac_1*R(j,k,r,s)}\leq \varepsilon
\quad \text{ for all } (j,k),(r,s)\in\set{n}_{\neq}^2, \label{e826628}\\
\xi
:=\frac{1}{n^2(n-1)^2}\sum_{(j,k)\in\set{n}_{\neq}^2}
\sum_{(r,s)\in\set{n}_{\neq}^2}\nu(j,k,r,s).\nonumber
\end{gather}
Then 
\begin{align}
\dtv{S_n}{1+S_n}
&\leq \frac{C(\frac{1}{8\varepsilon})}{\sqrt{\pi}}
\frac{1}{\sqrt{\frac{1}{8}+\floor{\frac{m}{2}}\xi}}. 
\label{e224527}
\end{align}
\end{theorem}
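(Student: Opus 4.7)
The plan is to combine a Fourier-analytic smoothness inequality with the generalized normalized matrix hafnian inequality announced in the abstract. Since $S_n$ takes values in $\ZZ$, $\dtv{S_n}{1+S_n}$ can be controlled by an integral over $[-\pi,\pi]$ involving $|\phi_{S_n}|$, the modulus of the characteristic function of $S_n$. A Plancherel-plus-Cauchy--Schwarz argument with a free weight function is the natural device; the shape of the target bound and, in particular, the representation of $C(\alpha)$ as a supremum in Remark \ref{r297582} both strongly suggest that the weight is chosen precisely so that the final estimate reduces to an instance of the supremum defining $C(1/(8\varepsilon))$.

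The central step is a Gaussian-type bound on $|\phi_{S_n}(t)|^2$. The key observation is that $R(j,k,r,s)$ is exactly the distribution one obtains by symmetrizing within a pair of rows $(j,k)$ mapped to a pair of columns $(r,s)$. I would therefore pair up the rows into $\{1,2\},\{3,4\},\dots,\{2m-1,2m\}$, then pair these pairs once more (giving $\floor{m/2}$ blocks) and average $\pi$ within the resulting blocks. This recasts $|\phi_{S_n}(t)|^2$ as a normalized sum over matchings of the row-pair structure with a column-pair structure, of products of $|\phi_{R(j,k,r,s)}(t)|^2$; this is precisely a generalized normalized matrix hafnian of the symmetric matrix whose $((j,k),(r,s))$-entry is $|\phi_{R(j,k,r,s)}(t)|^2$. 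The paper's auxiliary hafnian inequality then bounds this hafnian in terms of a suitable power (involving $\floor{m/2}$) of the arithmetic mean of its entries.

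Next I would invoke a standard pointwise Fourier bound of the form $|\phi_{R(j,k,r,s)}(t)|^2\leq 1 - c(1-\nu(j,k,r,s))\sin^2(t/2)$, valid for any $\ZZ$-valued probability distribution; combined with $\nu(j,k,r,s)\leq\varepsilon$, the averaging over $(j,k),(r,s)\in\set{n}_{\neq}^2$, and the elementary inequality $1-u\leq\ee^{-u}$, this yields an estimate of the form $|\phi_{S_n}(t)|^2\leq \ee^{-c_{\varepsilon}\floor{m/2}\xi\sin^2(t/2)}$ up to harmless factors absorbed by the constant in $C$. Substituting into the Fourier smoothness inequality from the first step and making the natural change of variables $u=c_\varepsilon\floor{m/2}\xi\sin^2(t/2)$ should reduce the remaining integral to the supremum defining $C(1/(8\varepsilon))$; the additive $\tfrac{1}{8}$ in the denominator is presumably the by-product of the particular weight chosen to enable this optimization, and of the $\varepsilon$-dependent contribution to the exponent that survives even when $\xi$ is very small.

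The main obstacle I anticipate is the second step: faithfully converting the permanent-style expression $\phi_{S_n}(t)=\frac{1}{n!}\sum_{r\in\set{n}_{\neq}^n}\prod_{j=1}^n\phi_{X_{j,r(j)}}(t)$ into a quantity with a genuine generalized hafnian structure to which the auxiliary inequality applies, and then carrying out the combinatorial bookkeeping delicately enough to preserve the stated constants. This is where the paper's new hafnian inequality should do the main work, and where I would expect the factor $\floor{m/2}$ (rather than the naive $m$ or $n$) to emerge from the additional pairing needed to handle $|\phi_{S_n}|^2$ symmetrically.
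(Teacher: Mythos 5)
Your sketch stalls at its central quantitative step, in two ways. First, the pointwise bound you invoke, $|\phi_{R(j,k,r,s)}(t)|^2\leq 1-c\,(1-\nu(j,k,r,s))\sin^2(t/2)$, has the wrong quantity in it: since $1-\nu(j,k,r,s)=\dtv{R(j,k,r,s)}{\dirac_1*R(j,k,r,s)}$, the inequality is false as stated (take $R=\dirac_0$, for which $1-\nu=1$ but $|\phi|\equiv 1$); a bound of this shape does hold with $\nu$ in place of $1-\nu$. Second, and more seriously, even after fixing the sign, the passage ``averaging over $(j,k),(r,s)$ \dots\ yields $|\phi_{S_n}(t)|^2\leq \ee^{-c_\varepsilon\floor{m/2}\xi\sin^2(t/2)}$ up to harmless factors'' runs Jensen in the wrong direction. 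The hafnian inequality (Lemma \ref{l3836}) can only bound the permutation average by (a power of) the \emph{arithmetic mean} of the quantities $|\phi_{R(j,k,r,s)}(t)|^2$, and an average of terms $\ee^{-x\,\nu(j,k,r,s)}$ is bounded \emph{below}, not above, by $\ee^{-x\xi}$; if some $\nu(j,k,r,s)$ vanish (which the hypotheses allow), the average does not decay at all as $x\sin^2(t/2)$ grows, while $\ee^{-x\xi}$ does, so this cannot be absorbed into any absolute constant. This reverse-Jensen obstacle is precisely what the paper's argument is designed to overcome, and it is where $\varepsilon$, the constant $C(\frac{1}{8\varepsilon})$ and the additive $\frac{1}{8}$ actually originate (not from a choice of Fourier weight): the paper first applies the pairing identity of Lemma \ref{l298735765} and the Mattner--Roos smoothness inequality (Lemma \ref{l32521}) to each diagonal, getting $\frac{1}{\sqrt{\pi}}\EE(\frac{1}{8}+L)^{-1/2}$ with $L$ a function of the random permutation; it then writes $b^{-1/2}=\Gamma(1/2)^{-1}\int_0^\infty x^{-1/2}\ee^{-xb}\,\dd x$ so that the permutation average becomes a genuine generalized hafnian with entries $\exp(-\frac{x}{2}\widetilde{\nu}(\cdot))$, applies Lemma \ref{l3836} together with AM--GM (which is where $\floor{m/2}$ arises), undoes the integral representation, and finally applies the moment inequality of Corollary \ref{c9724722}, whose hypothesis $\Var\leq\EE$ is exactly what the assumption $\nu\leq\varepsilon$ guarantees.

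There is also a structural inaccuracy in the middle step: $|\phi_{S_n}(t)|^2$ is \emph{not} a generalized normalized hafnian of the matrix with entries $|\phi_{R(j,k,r,s)}(t)|^2$; squaring the modulus produces a double sum over pairs of permutations, not a single hafnian. What is true (and what the paper exploits, though for Laplace-type entries rather than characteristic functions) is that $\phi_{S_n}(t)$ itself, after the pairing of Lemma \ref{l298735765}, is a generalized normalized hafnian with entries $\phi_{R(\varphi(2\ell-1),\varphi(2\ell),r,s)}(t)$, and the squared moduli only appear as the output of the hafnian inequality; likewise $\floor{m/2}$ comes from AM--GM applied to the $m$ square-root factors, not from ``pairing the pairs once more''. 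A purely Fourier route might conceivably be completed, but as it stands your proposal is missing the key mechanism (a reverse-Jensen argument under the constraint $\nu\leq\varepsilon$, or an equivalent) that produces the stated constants, and the intermediate estimate it relies on is both misstated and not derivable by the proposed averaging.
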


\begin{theorem}\label{t9117371}
Let the  $X_{j,r}$ for $j,r\in\set{n}$ be $\RR$-valued, 
$\varepsilon\in(0,1]$, $t\in[0,\infty)$, and
\begin{gather}
\zeta(j,k,r,s;t)
:=1-\conc{R(j,k,r,s)}{t}\leq \varepsilon
\quad \text{ for all }
(j,k),(r,s)\in\set{n}_{\neq}^2, \label{e297563}\\
\eta(t)
:=\frac{1}{n^2(n-1)^2}\sum_{(j,k)\in\set{n}_{\neq}^2}
\sum_{(r,s)\in\set{n}_{\neq}^2}\zeta(j,k,r,s;t).\nonumber
\end{gather}
Then 
\begin{align}
\conc{S_n}{t}
&\leq \frac{C(\frac{1}{8\varepsilon})}{\sqrt{\pi}}
\frac{1}{\sqrt{\frac{1}{8}+\floor{\frac{m}{2}}\eta(t)}}.\label{e2245221} 
\end{align}
\end{theorem}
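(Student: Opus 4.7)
The plan is to mirror the proof of Theorem~\ref{t287432} almost verbatim, substituting the L\'evy concentration function $\conc{\cdot}{t}$ for the smoothness measure $\dtv{\cdot}{1+\cdot}$. This substitution is viable because $\conc{\cdot}{t}$ enjoys the two abstract properties exploited in the proof of Theorem~\ref{t287432}: monotonicity under convolution with an arbitrary distribution, and a Kolmogorov--Rogozin type summation bound for independent $\RR$-valued summands that parallels the corresponding inequality for $1-\dtv{\cdot}{1+\cdot}$ in the $\ZZ$-valued case.

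Concretely, I would start from the representation
\begin{align*}
P^{S_n}=\frac{1}{n!}\sum_{r\in\set{n}_{\neq}^n}\bigast_{j=1}^n Q_{j,r(j)}
\end{align*}
and fix an almost perfect matching of $\set{n}$ into $m=\lfloor n/2\rfloor$ pairs (plus a singleton when $n$ is odd). I would then invoke the invariance noted in the introduction: swapping $\pi(j)$ with $\pi(k)$ for any pair does not alter the joint law of $(X_{j,\pi(j)})_{j=1}^n$. Randomizing each within-pair swap by an independent fair coin realizes the contribution from each pair $(j,k)$ as a random variable with conditional distribution $R(j,k,\pi(j),\pi(k))$, and the pair contributions are mutually independent given $\pi$.

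Applying a Kolmogorov--Rogozin type bound for $\conc{\cdot}{t}$ to this sum of independent pair contributions then yields an estimate of the schematic form $\mathrm{const}\big/\sqrt{\mathrm{const}'+\sum_{\text{pairs}}\zeta(j,k,\pi(j),\pi(k);t)}$. Taking expectation over $\pi$ and averaging over the choice of matching recasts the right-hand side as a generalized normalized matrix hafnian with entries drawn from the array $(\zeta(j,k,r,s;t))_{(j,k),(r,s)\in\set{n}_{\neq}^2}$. Applying the auxiliary hafnian inequality that is the technical centerpiece of the paper --- specifically Corollary~\ref{c9724722}, whose constant $\widetilde{C}(\alpha,\frac{1}{2})$ coincides with $C(\alpha)$ by Remark~\ref{r297582} --- converts this expression into the closed-form upper bound~\eqref{e2245221}.

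The main obstacle is organizing the $\pi$-averaging so that it is recognizable as a generalized normalized hafnian of exactly the form handled by the auxiliary inequality, with the numerical constants $1/\sqrt{\pi}$ in the denominator and $\lfloor m/2\rfloor$ inside the square root. The transition from $\ZZ$-valued to $\RR$-valued $X_{j,r}$ is transparent at the combinatorial level; what requires care is establishing the Kolmogorov--Rogozin type bound for $\conc{\cdot}{t}$ (typically via an Esseen-style smoothing inequality on characteristic functions) with the same sharp Fourier-analytic constant $1/\sqrt{\pi}$ that is available for the $\ZZ$-valued smoothness functional, after which the rest of the argument proceeds symbolically in parallel with the proof of Theorem~\ref{t287432}.
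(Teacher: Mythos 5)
Your high-level plan (the pairing decomposition of Lemma \ref{l298735765}, a Kolmogorov--Rogozin type bound, a hafnian estimate, and the constant $C(\tfrac{1}{8\varepsilon})=\widetilde{C}(\tfrac{1}{8\varepsilon},\tfrac12)$) is indeed the paper's route: Theorem \ref{t9117371} is obtained from the analogue (Theorem \ref{t22628362}) of Theorem \ref{t9117372}, run word for word with $1-\conc{\cdot}{t}$ in place of $1-\dtv{\cdot}{1+\cdot}$. In particular, the concentration-function analogue of the Kolmogorov--Rogozin inequality that you single out as the ``main obstacle'' is already available with the needed constant: it is \eqref{e79825767} in Lemma \ref{l32521}, cited from the literature, and the constants $\tfrac{1}{\sqrt{\pi}}$ and $\tfrac18$ come simply from rewriting $\sqrt{2/\pi}\,(\tfrac14+L)^{-1/2}$ as $\tfrac{1}{\sqrt{\pi}}(\tfrac18+\tfrac{L}{2})^{-1/2}$; no new Esseen-type smoothing argument is required. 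The genuine gap is in your central step: the claim that ``taking expectation over $\pi$ \dots recasts the right-hand side as a generalized normalized hafnian with entries drawn from the array $(\zeta(j,k,r,s;t))$'' is false as stated. After Lemma \ref{l32521} one faces $\EE\bigl(\tfrac18+\tfrac12\sum_{\ell=1}^m \zeta(\varphi(2\ell-1),\varphi(2\ell),\pi(2\ell-1),\pi(2\ell);t)\bigr)^{-1/2}$, and an inverse square root of a sum does not factorize over the pairs, so its $\pi$-average is not a hafnian of anything. The paper's essential device, absent from your outline, is the integral representation \eqref{e82947}, which converts this expectation into $\frac{1}{\Gamma(1/2)}\int_0^\infty x^{-1/2}\ee^{-x/8}\,\EE\prod_{\ell=1}^m\exp\bigl(-\tfrac{x}{2}\,\zeta(\varphi(2\ell-1),\varphi(2\ell),\pi(2\ell-1),\pi(2\ell);t)\bigr)\,\dd x$; only the inner product of exponentials is a generalized normalized hafnian, with entries $\exp(-\tfrac{x}{2}\zeta(\cdots))$ rather than $\zeta$ itself, and the inequality applied to it is Lemma \ref{l3836}. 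You instead name Corollary \ref{c9724722} as ``the auxiliary hafnian inequality''; that corollary is a different statement (a moment bound for $\EE(\alpha+Y)^{-\beta}$) and cannot play the hafnian role.

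A second, related omission: your outline never uses the hypothesis $\zeta(j,k,r,s;t)\le\varepsilon$ and never explains where $C(\tfrac{1}{8\varepsilon})$ comes from, and this is exactly the remaining nontrivial part of the proof. After Lemma \ref{l3836} one still needs the arithmetic--geometric mean inequality to produce the exponent $\floor{\frac{m}{2}}$, then \eqref{e82947} read backwards to identify the integral as $\EE(\tfrac18+M)^{-1/2}$ for a sum $M$ of $\floor{\frac{m}{2}}$ i.i.d.\ random variables whose values are the numbers $\zeta(\varphi(2\ell-1),\varphi(2\ell),r,s;t)\in[0,\varepsilon]$, and finally Corollary \ref{c9724722} applied to $M/\varepsilon$: the bound $\zeta\le\varepsilon$ is precisely what guarantees $\Var(M/\varepsilon)\le\EE(M/\varepsilon)$, and this is where $C(\tfrac{1}{8\varepsilon})=\widetilde{C}(\tfrac{1}{8\varepsilon},\tfrac12)$ enters. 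With these steps restored (plus the easy final averaging over $\varphi$, which replaces $\eta_1(\varphi,t)$ by $\eta(t)$), your argument coincides with the paper's; as written, however, it skips the two devices (the Gamma-integral factorization and the moment bound with the $\varepsilon$-condition) that carry the actual technical weight.
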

\begin{remark}
Let the assumptions of Theorem \ref{t287432} or 
Theorem \ref{t9117371} hold. 
\begin{enumerate}\itemsep0pt

\item 
 Since 
$\nu(j,k,r,s),\zeta(j,k,r,s;t)\in[0,1]$ for all 
$(j,k),(r,s)\in\set{n}_{\neq}^2$, 
the inequalities in \eqref{e826628}  and  \eqref{e297563}
are valid for $\varepsilon=1$. In this case,
$\frac{C(\frac{1}{8\varepsilon})}{\sqrt{\pi}}
=\frac{C(\frac{1}{8})}{\sqrt{\pi}}
\leq 1.09$, see Remark \ref{r297582}. 
The upper bounds in \eqref{e224527} and \eqref{e2245221}
are of order $n^{-1/2}$ if $\xi$ and $\eta(t)$ are bounded away from 
zero. 

\item In \citet[Proposition 2.4]{MR3322321} another 
upper bound for $\conc{S_n}{t}$ can be found,
which is difficult to compare with \eqref{e2245221}. However, 
in contrast to \eqref{e2245221}, it cannot be useful for 
$t\geq 1$, since it contains a summand $c_1(n) t$, where $c_1(n)\geq1$ 
depends on $n$. 

\item By the properties of total variation distance and concentration 
function, we obtain that 
\begin{align*}
\nu(j,k,r,s)
&\geq 1-\frac{1}{2}(\dtv{Q_{j,r}}{\delta_1*Q_{j,r}}
+\dtv{Q_{j,s}}{\delta_1*Q_{j,s}}),\\
\zeta(j,k,r,s;t)
&\geq1-\frac{1}{2}(\conc{Q_{j,r}}{t}+\conc{Q_{j,s}}{t}),
\end{align*}
for $(j,k),(r,s)\in\set{n}_{\neq}^2$. 
Therefore, the right-hand sides of \eqref{e224527}
and \eqref{e2245221} can be further estimated. 
In fact,  $\xi$ and $\eta(t)$ used there
can be replaced with 
\begin{align*}
\frac{1}{n^2}
\sum_{(j,r)\in\set{n}^2} 
(1-\dtv{Q_{j,r}}{\dirac_1*Q_{j,r}})
\quad \text{ and }\quad 
\frac{1}{n^2}
\sum_{(j,r)\in\set{n}^2}(1-\conc{Q_{j,r}}{t}),
\end{align*}
respectively. 
However, in contrast to \eqref{e224527} and \eqref{e2245221}, 
the resulting inequalities are useless if the $X_{j,r}$ are constants. 
On the other hand, if the random matrix $X$ consists of identical 
columns, then 
$S_n=\sum_{j=1}^nX_{j,1}$ is a sum of independent random variables and 
the mentioned inequalities are comparable to those in 
Lemma \ref{l32521} below.  
\end{enumerate}
\end{remark}

In what follows, we assume that the $X_{j,r}$ are Bernoulli random 
variables. Let $\Be(p)=(1-p)\dirac_0+p\dirac_1$ be the Bernoulli 
distribution with success probability $p\in[0,1]$. 

\begin{theorem}\label{t98374587}
Let $p_{j,r}\in[0,1]$ and 
$Q_{j,r}=\Be(p_{j,r})$ for all $j,r\in\set{n}$.
For $(j,k),(r,s)\in\set{n}_{\neq}^2$, let
\begin{gather*}
u(j,k,r,s)
=p_{j,r}+p_{k,r}+p_{j,s}+p_{k,s},\quad
v(j,k,r,s)
=p_{j,r}p_{k,s}+p_{k,r}p_{j,s},\\
w(j,k,r,s)
=\frac{1}{2}u(j,k,r,s)-v(j,k,r,s),\\
\tau(j,k,r,s)
=\min\{w(j,k,r,s),1-w(j,k,r,s)\},\\
\sigma^2(j,k,r,s)
=\frac{u(j,k,r,s)}{2}+v(j,k,r,s)-\frac{u(j,k,r,s)^2}{4}
\end{gather*}
and let 
\begin{align*}
\chi
&=\frac{1}{n^2(n-1)^2}\sum_{(j,k)\in\set{n}_{\neq}^2}
\sum_{(r,s)\in\set{n}_{\neq}^2}
\tau(j,k,r,s),\quad
\psi
=\frac{1}{2n^2(n-1)^2}\sum_{(j,k)\in\set{n}_{\neq}^2}
\sum_{(r,s)\in\set{n}_{\neq}^2}\sigma^2(j,k,r,s).
\end{align*}
If $\tau(j,k,r,s)\leq\varepsilon$ for all 
$(j,k),(r,s)\in\set{n}_{\neq}^2$, then  
\begin{align}
\dtv{S_n}{1+S_n}
&\leq  \frac{C(\frac{1}{8\varepsilon})}{\sqrt{\pi}}
\frac{1}{\sqrt{\frac{1}{8}
+\floor{\frac{m}{2}}\chi}}.\label{e184376651}
\end{align}
If $\frac{1}{2}\sigma^2(j,k,r,s)\leq\varepsilon$ for all 
$(j,k),(r,s)\in\set{n}_{\neq}^2$, then  
\begin{align}
\conc{S_n}{0}
&\leq   \frac{C(\frac{1}{8\varepsilon})}{\sqrt{\pi}}
\frac{1}{\sqrt{\frac{1}{8}
+\floor{\frac{m}{2}}\psi}}.\label{e18437665}
\end{align}
\end{theorem}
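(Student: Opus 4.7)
The plan is to adapt the proofs of Theorems~\ref{t287432} and~\ref{t9117371} to the Bernoulli setting by explicit computation of $R(j,k,r,s)$, replacing the roles of $\nu(j,k,r,s)$ and $\zeta(j,k,r,s;0)$ there with $\tau(j,k,r,s)$ and $\tfrac{1}{2}\sigma^2(j,k,r,s)$. First, expanding the definitions and averaging the two convolutions gives that $R(j,k,r,s)$ is supported on $\{0,1,2\}$ with masses $a=1-\tfrac{u}{2}+\tfrac{v}{2}$, $b=w$, $c=\tfrac{v}{2}$; its mean is $u/2$ and, using the identity $\Var(a\dirac_0+b\dirac_1+c\dirac_2)=ab+bc+4ac$, its variance equals $\sigma^2(j,k,r,s)$.

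Next, I would compare $\nu$ and $\zeta(\cdot;0)$ with the Bernoulli-specific quantities. From the coupling identity $\dtv{R}{\dirac_1*R}=1-\min(a,b)-\min(b,c)$ and $\conc{R}{0}=\max(a,b,c)$ for three-point distributions supported in $\{0,1,2\}$, one gets
\[
\nu(j,k,r,s)=\min(a,b)+\min(b,c),\qquad
\zeta(j,k,r,s;0)=1-\max(a,b,c).
\]
A short case analysis on the ordering of $(a,b,c)$ yields $\nu(j,k,r,s)\geq\min(b,a+c)=\tau(j,k,r,s)$, and a more delicate case split on which of $a,b,c$ is the mode gives $\zeta(j,k,r,s;0)\geq\tfrac{1}{2}\sigma^2(j,k,r,s)$. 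The only non-trivial case, in which $c$ is the mode and $c>\tfrac{1}{2}$ (forcing $u>2$), relies on the Bernoulli-specific bound $v\leq 1+(u-2)^2/4$ obtained from the constraints $p_{j,r},p_{k,r},p_{j,s},p_{k,s}\in[0,1]$ rather than merely from the AM--GM bound $v\leq u^2/4$.

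Finally, I would invoke the auxiliary hafnian inequality that underlies the proofs of Theorems~\ref{t287432} and~\ref{t9117371}, but feed the Bernoulli quantities $\tau$ and $\tfrac{1}{2}\sigma^2$ into the places occupied there by $\nu$ and $\zeta$. Under the hypotheses $\tau(j,k,r,s)\leq\varepsilon$ and $\tfrac{1}{2}\sigma^2(j,k,r,s)\leq\varepsilon$ respectively, the Step~2 inequalities ensure that $\varepsilon$ controls the same quantities that enter the hafnian estimate, so the constant $C(\tfrac{1}{8\varepsilon})/\sqrt{\pi}$ is preserved; together with the monotonicity of the bound in the averaged parameter, this yields \eqref{e184376651} with $\chi$ and \eqref{e18437665} with $\psi$.

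The main obstacle is the pointwise inequality $\zeta(j,k,r,s;0)\geq\tfrac{1}{2}\sigma^2(j,k,r,s)$: it fails for arbitrary three-point distributions on $\{0,1,2\}$ (a generic Chebyshev-type argument yields at best the weaker constant $\tfrac{1}{4}$), so the sharp factor $\tfrac{1}{2}$ has to be extracted from the Bernoulli structure itself through the sharper upper bound on $v$ noted above. By contrast, the inequality $\nu\geq\tau$ used for~\eqref{e184376651} is a purely elementary combinatorial identity about three-point distributions and is not where the work lies.
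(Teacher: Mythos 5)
Your proposal is correct and follows essentially the paper's own route: it reconstructs Lemma \ref{l2320843} (the three-point representation of $R(j,k,r,s)$, the comparisons $\nu\geq\tau$ and $1-\conc{R}{0}\geq\tfrac{1}{2}\sigma^2$, the latter via the Bernoulli-specific constraint $v\leq 2-u+\tfrac{u^2}{4}$, i.e.\ \eqref{e21865}) and then feeds $\tau$ and $\tfrac{1}{2}\sigma^2$ into the general machinery of Theorems \ref{t9117372} and \ref{t22628362} (equivalently, reruns the proofs of Theorems \ref{t287432}/\ref{t9117371} with these lower bounds, which is legitimate since the hypothesis $\tau,\tfrac{1}{2}\sigma^2\leq\varepsilon$ supplies the variance--mean condition needed for Corollary \ref{c9724722}), finally averaging over $\varphi$ to get $\chi$ and $\psi$. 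Your identification of the $c$-mode case with $u>2$ as the only place where the extra Bernoulli constraint is needed is accurate, so no gap remains.
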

\begin{remark} 
Let the assumptions of Theorem \ref{t98374587} hold. 
\begin{enumerate}

\item In contrast to \eqref{e224527} and \eqref{e2245221}, the 
upper bounds in \eqref{e184376651} and \eqref{e18437665} 
are explicit and do not contain the 
total variation distance or concentration function. The expression 
$\sigma^2(j,k,r,s)$ is the variance of $R(j,k,r,s)$ 
and we have $w(j,k,r,s)=R(j,k,r,s)(\{1\})$ 
for $(j,k),(r,s)\in\set{n}_{\neq}^2$, see Lemma \ref{l2320843}. 

\item Lemma \ref{l2320843} below implies that $\tau(j,k,r,s)$ and 
$\frac{1}{2}\sigma^2(j,k,r,s)$ are in $[0,\frac{1}{2}]$ for 
$(j,k),(r,s)\in\set{n}_{\neq}^2$. Hence, the inequalities in \eqref{e184376651} and \eqref{e18437665} are valid for 
$\varepsilon=\frac{1}{2}$. In this case, we get
$\frac{C(\frac{1}{8\varepsilon})}{\sqrt{\pi}}
=\frac{C(\frac{1}{4})}{\sqrt{\pi}}\leq 0.88$, see Remark \ref{r297582}.

\item The upper bound in \eqref{e184376651}
(resp.\ in \eqref{e18437665}) is of the same order as 
the one in \eqref{e224527} (resp.\ in \eqref{e2245221} for $t=0$), 
see Lemma \ref{l2320843}. 

\end{enumerate}
\end{remark}


\section{Proofs} \label{s2864583}
We need some preparation. 
Let the assumptions of Section \ref{s257568} hold 
if not stated otherwise. 
As usual, empty convolutions products and $0$-fold convolution powers 
of finite signed measures on the Borel $\sigma$-algebra over $\RR$ are 
defined to be $\dirac_0$. 
\begin{lemma}\label{l298735765}
For arbitrary $\varphi\in\set{n}_{\neq}^n$, we have 
\begin{align}
P^{S_n}
&=\frac{1}{n!}\sum_{r\in\set{n}_{\neq}^n}
\Big(\bigast_{\ell=1}^mR(\varphi(2\ell-1),
\varphi(2\ell),r(2\ell-1),r(2\ell))\Big)*Q_{\varphi(n),r(n)}^{*(n-2m)}.
\label{e23465}
\end{align}
\end{lemma}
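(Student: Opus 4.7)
The plan is to take the identity
\[P^{S_n}=\frac{1}{n!}\sum_{r\in\set{n}_{\neq}^n}\bigast_{j=1}^nQ_{j,r(j)}\]
stated in the introduction and rewrite it in two steps: first reorder the convolution factors according to $\varphi$, and then symmetrize within consecutive pairs of indices. Since convolution is commutative and $\varphi$ is a permutation, $\bigast_{j=1}^nQ_{j,r(j)}=\bigast_{j=1}^nQ_{\varphi(j),r(\varphi(j))}$. The map $r\mapsto r\circ\varphi$ is a bijection of $\set{n}_{\neq}^n$, so relabelling the summation index gives
\[P^{S_n}=\frac{1}{n!}\sum_{r\in\set{n}_{\neq}^n}\bigast_{j=1}^nQ_{\varphi(j),r(j)}.\]

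Next I would symmetrize over the subgroup $T\subseteq\set{n}_{\neq}^n$ consisting of permutations that act independently on the pairs $\{2\ell-1,2\ell\}$ for $\ell=1,\dots,m$ (either swapping or fixing each pair), and fixing $n$ when $n$ is odd; thus $|T|=2^m$. For every $\tau\in T$ the map $r\mapsto r\circ\tau$ is again a bijection of $\set{n}_{\neq}^n$, so
\[P^{S_n}=\frac{1}{n!\,2^m}\sum_{r\in\set{n}_{\neq}^n}\sum_{\tau\in T}\bigast_{j=1}^nQ_{\varphi(j),r(\tau(j))}.\]
For fixed $r$, the inner convolution factors as a product over $\ell$ of pair-convolutions times $Q_{\varphi(n),r(n)}^{*(n-2m)}$ (which equals $\dirac_0$ when $n$ is even and $Q_{\varphi(n),r(n)}$ when $n$ is odd, since in the latter case $\tau(n)=n$). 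Because $\tau$ acts independently on each pair, the average over $T$ factorizes across pairs, and within the $\ell$-th pair the two choices of $\tau$ yield
\[\frac{1}{2}\bigl(Q_{\varphi(2\ell-1),r(2\ell-1)}*Q_{\varphi(2\ell),r(2\ell)}+Q_{\varphi(2\ell),r(2\ell-1)}*Q_{\varphi(2\ell-1),r(2\ell)}\bigr),\]
which is exactly $R(\varphi(2\ell-1),\varphi(2\ell),r(2\ell-1),r(2\ell))$ by definition. Assembling the factors yields \eqref{e23465}.

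The main obstacle is purely notational: verifying that both substitutions $r\mapsto r\circ\varphi$ and $r\mapsto r\circ\tau$ preserve the normalized sum $\frac{1}{n!}\sum_r$, and handling both parities of $n$ uniformly through the exponent $n-2m\in\{0,1\}$ on the leftover factor.
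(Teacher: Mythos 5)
Your argument is correct. Starting from $P^{S_n}=\frac{1}{n!}\sum_{r\in\set{n}_{\neq}^n}\bigast_{j=1}^nQ_{j,r(j)}$, the change of variables $r\mapsto r\circ\varphi$ and the averaging over the group $T$ of within-pair transpositions are both legitimate (for fixed $\varphi$, resp.\ $\tau$, each map $r\mapsto r\circ\varphi$, $r\mapsto r\circ\tau$ is a bijection of $\set{n}_{\neq}^n$), and bilinearity of convolution lets the $T$-average factor across the $m$ pairs, producing exactly $R(\varphi(2\ell-1),\varphi(2\ell),r(2\ell-1),r(2\ell))$ in the $\ell$-th pair and handling both parities through the exponent $n-2m\in\{0,1\}$ on the leftover factor. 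The paper treats the lemma differently: it invokes the identity theorem for characteristic functions together with a general Laplace expansion for permanents (citing external references), or alternatively sketches, without details, an induction on the number $m'$ of symmetrized pairs via a more general intermediate identity. Your single group-averaging step is essentially the unrolled form of that induction, but it is more self-contained: it works directly at the level of measures, requires no Fourier argument or permanent expansion, and proves the identity in one pass. What the paper's inductive formulation buys is the family of intermediate identities with $m'<m$ symmetrized pairs (not needed elsewhere in the paper), while your version buys a complete, elementary, and checkable proof of precisely the statement \eqref{e23465}.
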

\begin{proof}
This is easily shown using the identity theorem for 
characteristic functions and a general Laplace expansion 
for permanents of matrices with complex entries. 
For the latter see, for example,
\citet[page 559]{MR1440179} or \citet[Corollary 3.2]{Roos2020}. 
Alternatively, the following identity for $m'\in\set{m}$ 
is more general than \eqref{e23465} and can easily be shown 
inductively:
\begin{align*}
P^{S_n}
&=\frac{1}{n!}\sum_{r\in\set{n}_{\neq}^n}
\Big(\bigast_{\ell=1}^{m'}R(\varphi(2\ell-1),
\varphi(2\ell),r(2\ell-1),r(2\ell))\Big)*
\Big(\bigast_{j=2m'+1}^nQ_{\varphi(j),r(j)}\Big). 
\end{align*}
For $m'=m$, this yields \eqref{e23465}. 
\end{proof}

\begin{lemma}\label{l32521}
Let $Y_1,\dots,Y_n$ for  $n\in\NN$ be independent 
$\RR$-valued random variables and $T_n=\sum_{j=1}^nY_j$. 
Then, for $t\in[0,\infty)$, 
\begin{align}
\conc{T_n}{t}
\leq \sqrt{\frac{2}{\pii}}
\frac{1}{\sqrt{\frac{1}{4}+\sum_{j=1}^n(1-\conc{Y_j}{t})}}.
\label{e79825767}
\end{align}
If the $Y_1,\dots,Y_n$ are all  $\ZZ$-valued, then
\begin{align}
\dtv{T_n}{1+T_n}
\leq\sqrt{\frac{2}{\pii}}
\frac{1}{\sqrt{\frac{1}{4}+\sum_{j=1}^n(1-\dtv{Y_j}{1+Y_j})}}.
\label{e79825766}
\end{align}
\end{lemma}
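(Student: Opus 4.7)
The plan is to prove both \eqref{e79825767} and \eqref{e79825766} by a single characteristic-function argument together with a symmetrization step and a Gaussian comparison. Write $\phi_j$ for the characteristic function of $Y_j$, so that $\phi_{T_n}=\prod_{j=1}^n\phi_j$. The first step is to represent the left-hand sides as a Fourier integral against $\abs{\phi_{T_n}}$. For \eqref{e79825767} I would use the classical Esseen/Tsaregradski smoothing inequality, giving
\begin{align*}
\conc{T_n}{t}\leq \frac{1}{2\pii}\int_\RR\abs{\phi_{T_n}(u)}\,\rho_t(u)\,\dd u
\end{align*}
for a nonnegative kernel $\rho_t$ coming from a P\'{o}lya smoothing density with support scaled by $t$. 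For \eqref{e79825766} the analogous Parseval identity on $[-\pii,\pii]$ produces an upper bound for $\dtv{T_n}{1+T_n}$ in which a factor $\abs{1-\ee^{\ii u}}$ appears in the integrand.

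The second step is to symmetrize, replacing each $\phi_j$ by $\sqrt{\abs{\phi_j}^2}$, where $\abs{\phi_j(u)}^2=\EE\cos(u(Y_j-Y_j'))$ is itself the nonnegative characteristic function of $Y_j-Y_j'$ for an independent copy $Y_j'$. A standard concentration--smoothing lemma, obtained by covering $\RR$ by intervals of length $t$ and applying $1-\cos x\geq (2/\pii^2)x^2$ on $[-\pii,\pii]$, yields a pointwise inequality of the form
\begin{align*}
1-\abs{\phi_j(u)}^2\geq h(u)\,(1-\conc{Y_j}{t}),
\end{align*}
with the obvious analogue using $\dtv{Y_j}{1+Y_j}$ in the $\ZZ$-valued case. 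Combining this with $\abs{\phi_j(u)}\leq \exp\bigl(-\tfrac12(1-\abs{\phi_j(u)}^2)\bigr)$ and taking the product over $j$ gives the Gaussian-type estimate
\begin{align*}
\abs{\phi_{T_n}(u)}\leq \exp\Bigl(-\tfrac{h(u)}{2}\sum_{j=1}^n(1-\conc{Y_j}{t})\Bigr).
\end{align*}

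Inserting this bound into the Fourier integral from the first step and using $\int_\RR \ee^{-au^2}\dd u=\sqrt{\pii/a}$ collapses the estimate to one of the promised form $C/\sqrt{\sum_{j=1}^n(1-\conc{Y_j}{t})}$. The sharp prefactor $\sqrt{2/\pii}$ comes from the exactness of the Gaussian integral together with the correct normalization of the kernel $\rho_t$, while the additive $\tfrac{1}{4}$ inside the square root is obtained by retaining the trivial bound $\conc{T_n}{t}\leq 1$ in the argument (equivalently, by keeping a boundary contribution at $u=0$), so that the estimate remains valid when $\sum_j(1-\conc{Y_j}{t})$ is small or zero. The main obstacle, in my view, is this third step: pinning down exactly the constants $\sqrt{2/\pii}$ and $\tfrac{1}{4}$ rather than slightly looser ones will require either a sharp Parseval computation on the circle (for \eqref{e79825766}) or a carefully optimized P\'{o}lya-type kernel (for \eqref{e79825767}). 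Once these constants are calibrated, the two inequalities follow in parallel with only cosmetic differences between the lattice and non-lattice cases.
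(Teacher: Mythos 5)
Your sketch has a genuine gap at its central step. The pointwise inequality
\begin{align*}
1-\abs{\phi_j(u)}^2\;\geq\;h(u)\,(1-\conc{Y_j}{t})
\end{align*}
cannot hold with any $h$ that is positive on the range of $u$ you need. Fix $u\neq 0$ and let $Y_j$ take the values $0$ and $2\pi/u$ with probability $\frac12$ each: then $\abs{\phi_j(u)}=1$, so the left-hand side vanishes, while for $t<2\pi/\abs{u}$ we have $1-\conc{Y_j}{t}=\frac12>0$; since your smoothing kernel is supported in $\abs{u}\lesssim 1/t$ (and on all of $\RR$ when $t=0$, the case needed for the total variation statement), the resonance counterexample sits exactly inside the region where you would apply the bound. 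The link between $1-\abs{\phi_j}^2$ and $1-\conc{Y_j}{t}$ only holds after integrating in $u$ over an interval tied to $t$ — that is precisely the Esseen/Kolmogorov--Rogozin mechanism, as in Petrov's Theorem 2.1 cited in the paper — and that route produces an unspecified absolute constant, not the ones claimed here. This matters because the stated constants are essentially sharp: for i.i.d.\ symmetric Bernoulli summands, $\conc{T_n}{0}=\dtv{T_n}{1+T_n}=\binom{n}{\floor{n/2}}2^{-n}\sim\sqrt{2/(\pi n)}$, so any loss incurred in the cosine bound $1-\cos x\geq (2/\pi^2)x^2$ or in a P\'{o}lya-type kernel cannot be recovered afterwards. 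In other words, ``calibrating'' $\sqrt{2/\pi}$ and the additive $\frac14$ is not a finishing touch; it is the entire difficulty, and your third step as described does not supply it.

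For comparison, the paper does not reprove the lemma at all: it cites Corollaries 1.5 and 1.6 of Mattner and Roos, whose proof is not a Fourier/Gaussian comparison. There one reduces each summand to a symmetric two-point (Bernoulli-type) component, applies Kanter's Bessel-function concentration bound, which controls the relevant quantity by $G(s)=\ee^{-s}(I_0(s)+I_1(s))$ with $s=\sum_{j=1}^n(1-\conc{Y_j}{t})$ (respectively $s=\sum_{j=1}^n(1-\dtv{Y_j}{1+Y_j})$), and then uses an elementary analytic estimate of the form $G(s)\leq\sqrt{2/\pi}\,\bigl(s+\tfrac14\bigr)^{-1/2}$. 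If you want a self-contained proof rather than a citation, that is the route to follow; as it stands, your symmetrization step is false and your proposed method cannot deliver the constants in \eqref{e79825767} and \eqref{e79825766}.
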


\begin{proof}
See \citet[Corollaries 1.5 and 1.6]{MR2322695}. 
\end{proof}
It should be mentioned that  
\eqref{e79825767} is a special version of the Kolmogorov-Rogozin
inequality; e.g., see \citet[Theorem 2.1 on page 45]{MR974089}. 
Further, \eqref{e79825766} contains an 
improvement of an inequality of \citet[Proposition 4.6]{MR1716120}. 

Below we need an inequality for a generalized normalized hafnian. 
For $k,n\in\NN$ with $n\geq 2$ and $k\leq\floor{\frac{n}{2}}$ and
sets $A,B$ with cardinalities $\card{A}=k$ and $\card{B}=n$, 
we call the quantity 
\begin{align*}
\gnhaf(Z)
=\frac{(n-2k)!}{n!}\sum_{r\in B_{\neq}^{A\times\{1,2\}}}
\prod_{\ell\in A}z_{\ell,r(\ell,1),r(\ell,2)}
\end{align*}
the generalized normalized hafnian of the $3$-dimensional 
matrix $Z=(z_{\ell,r,s})\in\CC^{A\times B\times B}$. 
It is normalized in the sense that $\gnhaf(Z)=1$ 
if all the entries of $Z$ are equal to $1$.
\begin{remark}
If $k,n\in\NN$ with $n\geq 2$, $k\leq\floor{\frac{n}{2}}$ 
and $Z\in\CC^{\set{k}\times \set{n}\times \set{n}}$, then we can 
write  
\begin{align}\label{e276274}
\gnhaf(Z)
=\frac{(n-2k)!}{n!}\sum_{r\in \set{n}_{\neq}^{2k}}
\prod_{\ell=1}^kz_{\ell,r(2\ell-1),r(2\ell)}
=\frac{1}{n!}\sum_{r\in \set{n}_{\neq}^{n}}
\prod_{\ell=1}^kz_{\ell,r(2\ell-1),r(2\ell)}. 
\end{align}
If additionally $n=2k$ and $z_{\ell,r,s}=z_{1,r,s}=z_{1,s,r}$ 
for all $(\ell,r,s)\in\set{k}\times \set{n}\times\set{n}$, then 
\begin{align*}
\gnhaf(Z)
=\frac{1}{n!}\sum_{r\in \set{n}_{\neq}^{n}}
\prod_{\ell=1}^kz_{1,r(2\ell-1),r(2\ell)}
=\frac{k!2^k}{n!}\haf(Z'),
\end{align*}
where $\haf(Z')$ is the hafnian of the 2-dimensional matrix 
$Z'=(z'_{r,s})\in\CC^{n\times n}$ with $z'_{r,s}=z_{1,r,s}$
for all $r,s\in\set{n}$. For some properties of hafnians, see 
\citet{MR0464973} and \citet{MR3558532} and the references therein.
\end{remark}

\begin{lemma}\label{l3836}
Let $k,n\in\NN$ with $n\geq 2$ and  $k\leq\floor{\frac{n}{2}}$, 
$Z=(z_{\ell,r,s})\in\CC^{\set{k}\times \set{n}\times \set{n}}$. 
Then 
\begin{align}
|\gnhaf(Z)|
&\leq \prod_{\ell=1}^k\Big(
\frac{1}{n(n-1)}\sum_{(r,s)\in\set{n}_{\neq}^2}
\Big|\frac{1}{2}(z_{\ell,r,s}+z_{\ell,s,r})\Big|^2\Big)^{1/2}
\label{e82776586}\\
&\leq \prod_{\ell=1}^k
\Big(\frac{1}{n(n-1)}\sum_{(r,s)\in\set{n}_{\neq}^2}
|z_{\ell,r,s}|^2\Big)^{1/2}.\label{e82776587}
\end{align}
\end{lemma}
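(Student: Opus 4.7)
The plan is to prove the first inequality~\eqref{e82776586} and deduce the second~\eqref{e82776587} from it. The deduction is immediate: the pointwise convexity bound $|\tfrac12(a+b)|^2\le\tfrac12(|a|^2+|b|^2)$, applied with $a=z_{\ell,r,s}$, $b=z_{\ell,s,r}$ and summed over $(r,s)\in\set{n}_{\neq}^2$ (on which the involution $(r,s)\mapsto(s,r)$ is a bijection), shows that the symmetrized $L^2$-norm in~\eqref{e82776586} is dominated by the plain $L^2$-norm in~\eqref{e82776587}.

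For~\eqref{e82776586} I would argue by induction on $k$. The base case $k=1$ is direct: rewriting $\gnhaf(Z)=\tfrac{1}{n(n-1)}\sum_{(r,s)\in\set{n}_{\neq}^2}\tfrac12(z_{1,r,s}+z_{1,s,r})$ (legitimate by the same involution) and applying Cauchy--Schwarz yields the claim. For the inductive step I would use the one-row Laplace expansion
\begin{equation*}
\gnhaf(Z)=\frac{1}{n(n-1)}\sum_{(a,b)\in\set{n}_{\neq}^2}z_{k,a,b}\,\gnhaf(\tilde Z_{(a,b)}),
\end{equation*}
where $\tilde Z_{(a,b)}\in\CC^{\set{k-1}\times(\set{n}\setminus\{a,b\})^2}$ denotes the $(k-1)$-row restriction of $Z$ to $\set{n}\setminus\{a,b\}$; this follows from~\eqref{e276274} after reindexing the values $r(2k-1),r(2k)$. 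Because $\set{n}\setminus\{a,b\}$ depends only on $\{a,b\}$ as a set, $\gnhaf(\tilde Z_{(a,b)})=\gnhaf(\tilde Z_{(b,a)})$, so I may replace $z_{k,a,b}$ in the sum by $\tfrac12(z_{k,a,b}+z_{k,b,a})$. Cauchy--Schwarz in $(a,b)$ then extracts the expected $k$-th factor of the right-hand side and reduces the claim to the inequality
\begin{equation*}
\frac{1}{n(n-1)}\sum_{(a,b)\in\set{n}_{\neq}^2}|\gnhaf(\tilde Z_{(a,b)})|^2\le\prod_{\ell=1}^{k-1}\Big(\frac{1}{n(n-1)}\sum_{(r,s)\in\set{n}_{\neq}^2}\big|\tfrac12(z_{\ell,r,s}+z_{\ell,s,r})\big|^2\Big).
\end{equation*}

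This last inequality is where I expect the main work to lie. A naive application of the induction hypothesis to each $\tilde Z_{(a,b)}$ yields an upper bound by the average of $\prod_{\ell=1}^{k-1}\tilde M_\ell^{\mathrm{sym}}(a,b)$, where $\tilde M_\ell^{\mathrm{sym}}(a,b)$ is the symmetrized $L^2$-norm of $z_\ell$ restricted to $(\set{n}\setminus\{a,b\})_{\neq}^2$. A short bookkeeping (using that $\#\{(a,b)\in\set{n}_{\neq}^2:\{a,b\}\cap\{r,s\}=\emptyset\}=(n-2)(n-3)$ for each $(r,s)\in\set{n}_{\neq}^2$) shows that each single factor averages back to the corresponding full-set norm; however, the factors are positively correlated in $(a,b)$---they all decrease when $\{a,b\}$ carries more of the $|z_\ell|^2$-mass---so the average of their product is in general strictly larger than the product of their averages, and a bare AM--GM argument will not close the induction. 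The natural way forward is either to strengthen the inductive statement so that it already controls an $L^2$-average of submatrix generalized hafnians, or to re-express $\sum_{(a,b)}|\gnhaf(\tilde Z_{(a,b)})|^2$ as a generalized-hafnian-type quantity on the full index set $\set{n}$ and close the induction by applying the inductive bound to that expanded object. Engineering this step is, I anticipate, the genuine novelty of the lemma and its main technical difficulty.
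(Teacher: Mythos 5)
Your reduction of \eqref{e82776587} to \eqref{e82776586}, your base case $k=1$, and your Laplace expansion
$\gnhaf(Z)=\frac{1}{n(n-1)}\sum_{(a,b)\in\set{n}_{\neq}^2}z_{k,a,b}\,\gnhaf(\tilde Z_{(a,b)})$
together with the symmetrization and Cauchy--Schwarz in $(a,b)$ are all correct. But the proof is not complete: everything hinges on the displayed inequality
\begin{align*}
\frac{1}{n(n-1)}\sum_{(a,b)\in\set{n}_{\neq}^2}\bigl|\gnhaf(\tilde Z_{(a,b)})\bigr|^2
\leq\prod_{\ell=1}^{k-1}\Big(\frac{1}{n(n-1)}\sum_{(r,s)\in\set{n}_{\neq}^2}
\Big|\frac{1}{2}(z_{\ell,r,s}+z_{\ell,s,r})\Big|^2\Big),
\end{align*}
which you explicitly leave unproven, and which is not reachable from the induction hypothesis as stated: the hypothesis only controls each $|\gnhaf(\tilde Z_{(a,b)})|$ by $L^2$-norms over the \emph{restricted} index set $\set{n}\setminus\{a,b\}$, and, as you yourself observe, averaging the resulting products over $(a,b)$ does not factor back into a product of full-set norms (the factors are positively correlated, so AM--GM runs the wrong way). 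Thus the inductive step genuinely fails to close; this missing quadratic-mean bound on sub-hafnians \emph{is} the content of the lemma, not a technicality. (It does hold in the easiest case $k-1=1$, where $\gnhaf(\tilde Z_{(a,b)})$ is an average of symmetrized entries over the complement and Jensen plus a counting argument suffices, but no such elementary argument is offered, or apparent, for general $k$.)

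For comparison, the paper does not induct on a Laplace expansion at all. It rewrites $\gnhaf(Z)$ as $\frac{2^k}{n!}\sum_{W\in\Part(\set{n},w)}\prod_{\ell=1}^dg_\ell(W_\ell)$, where $w$ consists of $k$ parts equal to $2$ and $n-2k$ parts equal to $1$, $g_\ell(\{r,s\})=\frac{1}{2}(z_{\ell,r,s}+z_{\ell,s,r})$ for $\ell\in\set{k}$, and $g_\ell\equiv 1$ on singletons, and then invokes Lemma \ref{l29757659} (Corollary 3.1 of Roos, 2020), which bounds such partition sums by products of normalized $\ell^2$-norms of the $g_\ell$ over \emph{all} subsets of the prescribed sizes. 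That external inequality is exactly the engine that your missing step would have to replicate; if you want to complete your argument within this paper's toolkit, the natural fix is to abandon the row-by-row induction and apply Lemma \ref{l29757659} to the partition representation, or equivalently to strengthen your inductive statement to an $L^2$-averaged form --- which is essentially what the cited result provides.
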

\begin{remark} 
\begin{enumerate}

\item Lemma \ref{l3836} generalizes an inequality for hafnians, 
see \citet[formula (42)]{Roos2020}. 

\item In \eqref{e82776586} and \eqref{e82776587}, 
equality holds if for every $\ell\in\set{k}$ 
all the $z_{\ell,r,s}$ for $(r,s)\in\set{n}_{\neq}^2$ are 
identical. 

\end{enumerate}

\end{remark}

For the proof of Lemma \ref{l3836}, we need further notation and 
another lemma. 
If $A'\subseteq A$, $B'\subseteq B$ and $C'\subseteq C$  are all 
non-empty sets and $Z\in\CC^{A\times B\times C}$,  then 
$Z[A',B',C']$ denotes the submatrix of $Z$ with 
entries $z_{\ell,r,s}$ for $\ell\in A'$, $r\in B'$ and $s\in C'$. 
For a set $A$ and $k\in\Zpl=\{0,1,2,\dots\}$, let 
\begin{align*}
\Subs{A}{k}=\{A'\,|\,A'\subseteq A,\,\card{A'}=k\}
\end{align*}
be the set of all subsets of $A$ with cardinality $\card{A'}=k$. 
For $d\in\NN$ and $n\in\Zpl$, let
\begin{align*}
\Comp(n,d)
=\Big\{w=(w_1,\dots,w_d)\in\Zpl^d\,\Big|\,\sum_{\ell=1}^dw_\ell=n\Big\}
\end{align*}
be the set of the so-called weak $d$-compositions of $n$. 
For a set $A$ with $\card{A}=n$ and 
$w\in\Comp(n,d)$, let 
\begin{align*}
\Part(A,w)
=\Big\{W=(W_1,\dots,W_d) \,\Big|\, 
W_\ell\in\Subs{A}{w_\ell},\,(\ell\in\set{d})
\mbox{ pairwise disjoint}\Big\}.
\end{align*}
It is clear that  $\bigcup_{\ell=1}^dW_\ell=A$ for $W\in\Part(A,w)$. 
Every $W\in\Part(A,w)$ is called an
ordered weak partition of $A$ of type $w$. 

\begin{lemma}\label{l29757659}
Let $d,n\in\NN$, 
$w=(w_{1},\dots,w_{d})\in\Comp(n,d)$,  
$g_\ell:\,\Subs{\set{n}}{w_\ell}\longrightarrow\CC$ 
be a map for all $\ell\in\set{d}$. Then 
\begin{align*}
\ABS{\sum_{W\in\Part(\set{n},w)}
\prod_{\ell=1}^dg_\ell(W_\ell)} 
&\leq \frac{n!}{w!}\prod_{\ell=1}^d\Big(\frac{1}{\binomial{n}{w_\ell}}
\sum_{W_\ell\in\Subs{\set{n}}{w_\ell}}\abs{g_\ell(W_\ell)}^2\Big)^{1/2}.
\end{align*}
\end{lemma}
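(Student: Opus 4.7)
The plan is to proceed by induction on $d$. The base case $d=1$ is immediate: with $w=(n)$ both $\Part(\set{n},(n))$ and $\Subs{\set{n}}{n}$ reduce to $\{\set{n}\}$, so the two sides of the inequality coincide with $|g_1(\set{n})|$.

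For the inductive step, let $S$ denote the left-hand side and peel off the first coordinate to write $S=\sum_{W_1\in\Subs{\set{n}}{w_1}}g_1(W_1)\,T(W_1)$, where $T(W_1)$ is the analogous sum over partitions of $\set{n}\setminus W_1$ of type $(w_2,\ldots,w_d)$. A single Cauchy-Schwarz on the $W_1$-sum gives $|S|^2\leq\bigl(\sum_{W_1}|g_1(W_1)|^2\bigr)\bigl(\sum_{W_1}|T(W_1)|^2\bigr)$, and the inductive hypothesis bounds $|T(W_1)|^2$ by $\bigl((n-w_1)!/(w_2!\cdots w_d!)\bigr)^2\prod_{\ell\geq 2}\binomial{n-w_1}{w_\ell}^{-1}\sum_{W_\ell\subseteq\set{n}\setminus W_1}|g_\ell(W_\ell)|^2$. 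Summing over $W_1$ and swapping the order of summation so that $W_2,\ldots,W_d$ become primary variables replaces the count of $W_1$ by $\binomial{n-|W_2\cup\cdots\cup W_d|}{w_1}$, after which the identities $\binomial{n-w_1}{w_\ell}\binomial{n}{w_1}=\binomial{n}{w_\ell}\binomial{n-w_\ell}{w_1}$ together with $\frac{(n-w_1)!}{w_2!\cdots w_d!}\binomial{n}{w_1}=\frac{n!}{w!}$ should be used to collect the constants $(n!/w!)^2\prod_\ell\binomial{n}{w_\ell}^{-1}$.

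The main obstacle is precisely the combinatorial bookkeeping at this final step: the inductive hypothesis yields factors $\binomial{n-w_1}{w_\ell}^{-1}$ whereas the target demands $\binomial{n}{w_\ell}^{-1}$, and the naive bound $\binomial{n-w_1}{w_\ell}\leq\binomial{n}{w_\ell}$ runs in the wrong direction. The argument must therefore keep the disjointness between $W_1$ and the $W_\ell$'s explicit throughout, exploiting the precise $W_1$-count $\binomial{n-|W_2\cup\cdots\cup W_d|}{w_1}$ rather than a crude $\binomial{n}{w_1}$. If the direct induction proves too delicate, an alternative is to merge the last two coordinates into a single map $g'_{d-1}(A)=\sum_{W_{d-1}\subseteq A}g_{d-1}(W_{d-1})g_d(A\setminus W_{d-1})$ on $\Subs{\set{n}}{w_{d-1}+w_d}$, reducing to the $(d-1)$-case with modified composition $(w_1,\ldots,w_{d-2},w_{d-1}+w_d)$, and separately bounding $\sum_A|g'_{d-1}(A)|^2$ by an internal Cauchy-Schwarz applied to the sum defining $g'_{d-1}$.
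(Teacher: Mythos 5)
Your opening moves (peeling off $W_1$, one Cauchy--Schwarz, induction on $d$) are fine, but the step you yourself flag as ``combinatorial bookkeeping'' is not bookkeeping --- it is false, and this sinks the proposed induction. Once you invoke the inductive hypothesis to bound $|T(W_1)|^2$, the mutual disjointness of $W_2,\dots,W_d$ has been irrevocably discarded: the hypothesis returns the decoupled product $\prod_{\ell\geq 2}\binom{n-w_1}{w_\ell}^{-1}\Sigma_\ell(W_1)$ with $\Sigma_\ell(W_1)=\sum_{W_\ell\subseteq\set{n}\setminus W_1}|g_\ell(W_\ell)|^2$, so the refined count $\binom{n-|W_2\cup\cdots\cup W_d|}{w_1}$ you want to exploit is simply not available any more (the $W_\ell$, $\ell\geq2$, now overlap freely). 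Tracking the constants, to reach the target you would need
\begin{align*}
\frac{1}{\binom{n}{w_1}}\sum_{W_1\in\Subs{\set{n}}{w_1}}\prod_{\ell=2}^d\Sigma_\ell(W_1)
\;\leq\;\prod_{\ell=2}^d\Big(\frac{1}{\binom{n}{w_1}}\sum_{W_1\in\Subs{\set{n}}{w_1}}\Sigma_\ell(W_1)\Big),
\end{align*}
i.e.\ an ``$\mathrm{E}\prod\leq\prod \mathrm{E}$'' inequality for the positively correlated quantities $\Sigma_\ell(W_1)$, and this fails in general: take $n=3$, $w=(1,1,1)$, $g_2(\{1\})=g_3(\{1\})=1$ and $g_2,g_3$ zero elsewhere; the left-hand side is $\frac{2}{3}$ while the right-hand side is $\frac{4}{9}$. (In permanent language, with unit columns you would need $\sum_{r}(1-|a_{2r}|^2)(1-|a_{3r}|^2)\leq\frac{4}{3}$, which fails for $a_2=a_3=e_1$, even though the lemma is trivially true there because the permanent vanishes --- the loss is intrinsic to the method.) Indeed, for $w=(1,\dots,1)$ the lemma is exactly the Carlen--Lieb--Loss Hadamard-type inequality for permanents, which is known not to follow from a one-row expansion plus Cauchy--Schwarz induction; the sharp constant $n!/w!$ cannot be recovered this way.

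Your fallback of merging the last two blocks does not repair this: the internal Cauchy--Schwarz gives $\sum_A|g_{d-1}'(A)|^2\leq\binom{w_{d-1}+w_d}{w_{d-1}}\Sigma_{d-1}\Sigma_d$, and feeding that into the $(d-1)$-case with composition $(w_1,\dots,w_{d-2},w_{d-1}+w_d)$ yields the desired bound multiplied by the extra factor $\bigl(\binom{n}{w_d}/\binom{n-w_{d-1}}{w_d}\bigr)^{1/2}>1$, so the sharp constant is again missed. Note also that the paper itself does not prove the lemma by such an induction: it deduces it from Corollary 3.1 of Roos (2020), whose argument (like the Carlen--Lieb--Loss proof in the permanental case) uses a genuinely different mechanism. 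So either cite that result, as the paper does, or import its proof; the route proposed here cannot be completed as described.
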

\begin{proof}
This follows from the more general Corollary 3.1 in 
\citet{Roos2020}.
\end{proof}

\begin{proof}[Proof of Lemma \ref{l3836}]
Let $d=n-k=k+n-2k$ and $w=(w_1,\dots,w_k,w_{k+1},\dots,w_d)\in\Zpl^d$
with $w_\ell=2$ if $\ell\in\set{k}$ and $w_\ell=1$ if 
$\ell\in\set{d}\setminus\set{k}$. Since $\sum_{\ell=1}^dw_\ell=n$,
we have $w\in\Comp(n,d)$.
For $\ell\in\set{k}$, $(r,s)\in\set{n}_{\neq}^2$ and $W=\{r,s\}$, let
\begin{align*}
g_\ell(W)
&=\gnhaf(Z[\{\ell\},W,W])
=\frac{1}{2}(z_{\ell,r,s}+z_{\ell,s,r}).
\end{align*}
For $\ell\in\set{d}\setminus\set{k}$, $r\in\set{n}$ and $W=\{r\}$, 
let $g_\ell(W)=1$. Using \eqref{e276274}, it is easily shown 
inductively that, for $k'\in\set{k}$,  
\begin{align*}
\gnhaf(Z)
&=\frac{(n-2k)!}{n!}\sum_{r\in\set{n}_{\neq}^{2k}}
\Big(\prod_{\ell=1}^{k'}\Big( \frac{1}{2}(z_{\ell,r(2\ell-1),r(2\ell)}
+z_{\ell,r(2\ell),r(2\ell-1)})\Big)\Big)
\prod_{\ell=k'+1}^{k}z_{\ell,r(2\ell-1),r(2\ell)}.
\end{align*}
Letting $k'=k$, this yields
\begin{align*}
\gnhaf(Z)
&=\frac{(n-2k)!}{n!}\sum_{r\in\set{n}_{\neq}^{2k}}
\prod_{\ell=1}^kg_\ell(\{r(2\ell-1),r(2\ell)\})\\
&=\frac{1}{n!}\sum_{r\in\set{n}_{\neq}^{n}}
\prod_{\ell=1}^kg_\ell(\{r(2\ell-1),r(2\ell)\})\\
&=\frac{2^k}{n!}\sum_{W\in\Part(\set{n},w)}
\prod_{\ell=1}^dg_\ell(W_\ell).
\end{align*}
From Lemma \ref{l29757659}, we obtain that 
\begin{align*}
|\gnhaf(Z)|
&\leq  \prod_{\ell=1}^d\Big(\frac{1}{\binomial{n}{w_\ell}}
\sum_{W_\ell\in\Subs{\set{n}}{w_\ell}}|g_\ell(W_\ell)|^2\Big)^{1/2}
=\prod_{\ell=1}^k\Big(\frac{1}{\binomial{n}{2}}
\sum_{W_\ell\in\Subs{\set{n}}{2}}|g_\ell(W_\ell)|^2\Big)^{1/2}\\
&=\prod_{\ell=1}^k\Big(\frac{1}{2\binomial{n}{2}}
\sum_{(r,s)\in\set{n}_{\neq}^2}|g_\ell(\{r,s\})|^2\Big)^{1/2}\\
&=\prod_{\ell=1}^k\Big(\frac{1}{n(n-1)}
\sum_{(r,s)\in\set{n}_{\neq}^2}
\Big|\frac{1}{2}(z_{\ell,r,s}+z_{\ell,s,r})\Big|^2\Big)^{1/2},
\end{align*}
which shows \eqref{e82776586}. 
Inequality \eqref{e82776587} is clear. 
\end{proof}

\begin{lemma}\label{l296978}
Let $\alpha,\beta\in(0,\infty)$ 
and $Y$ be a non-negative random variable with finite expectation
$\mu:=\EE Y$ and variance $\sigma^2:=\Var Y$. 
Then, for all $c\in(0,\infty)$,
\begin{align}\label{e3649322}
\EE \frac{1}{(\alpha+Y)^\beta}
&\leq \frac{1}{(\alpha+c)^\beta}
+\frac{\beta(c-\mu)}{(\alpha+c)^{\beta+1}}
+\frac{\sigma^2+(c-\mu)^2}{c^2}
\Big(\frac{1}{\alpha^\beta}-\frac{\alpha+c(\beta+1)}{(\alpha+c)^{
\beta+1}}\Big).
\end{align}

\end{lemma}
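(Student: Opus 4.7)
The plan is to bound the function $h(y) := (\alpha + y)^{-\beta}$ from above on $[0, \infty)$ by a quadratic $q(y) = a + b(y-c) + K(y-c)^2$ whose expectation under $Y$ reproduces the right-hand side of \eqref{e3649322}. Since $\EE q(Y) = a + b(\mu - c) + K(\sigma^2 + (c - \mu)^2)$ and $Y \geq 0$ almost surely, setting $a = h(c) = (\alpha+c)^{-\beta}$ and $b = h'(c) = -\beta(\alpha+c)^{-\beta-1}$ produces the first two terms on the right-hand side of \eqref{e3649322} automatically (noting $b(\mu-c) = \beta(c-\mu)/(\alpha+c)^{\beta+1}$).

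The natural choice for $K$ is to force $q(0) = h(0)$ as well, so that $q$ is the quadratic tangent to $h$ at $c$ that also interpolates $h$ at $0$. A short calculation gives
\begin{align*}
K
= \frac{h(0) - h(c) + c\,h'(c)}{c^2}
= \frac{1}{c^2}\Bigl(\frac{1}{\alpha^\beta} - \frac{\alpha+c(\beta+1)}{(\alpha+c)^{\beta+1}}\Bigr),
\end{align*}
which is exactly the bracketed factor in the last term of \eqref{e3649322}. Hence once the pointwise bound $h \leq q$ on $[0, \infty)$ is established, taking expectations (and using $\EE(Y-c)^2 = \sigma^2 + (c-\mu)^2$) finishes the proof.

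The core of the argument is thus to show $\phi(y) := q(y) - h(y) \geq 0$ for all $y \in [0, \infty)$. By construction, $\phi(0) = \phi(c) = 0$ and $\phi'(c) = 0$. Since $h''(y) = \beta(\beta+1)(\alpha+y)^{-\beta-2}$ is strictly positive and strictly decreasing on $[0, \infty)$, the function $\phi''(y) = 2K - h''(y)$ is strictly increasing, so $\phi'$ is strictly convex. Rolle's theorem applied to $\phi$ on $[0, c]$ yields some $y_0 \in (0, c)$ with $\phi'(y_0) = 0$; together with $\phi'(c) = 0$ these exhaust the zeros of the strictly convex function $\phi'$. Thus $\phi' \leq 0$ on $[y_0, c]$ and $\phi' \geq 0$ on $[0, y_0] \cup [c, \infty)$. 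Combined with the boundary values $\phi(0) = \phi(c) = 0$, this forces $\phi \geq 0$ on each of $[0, y_0]$, $[y_0, c]$, and $[c, \infty)$, which is the desired inequality.

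The step I expect to be the main obstacle is precisely controlling $\phi$ on the unbounded region $[c, \infty)$: a direct convexity-of-$h$ argument (comparing $h$ to a chord or a tangent line) only delivers a quadratic upper bound on the bounded interval $[0, c]$, whereas on $[c, \infty)$ the quadratic $q$ eventually dwarfs $h$ and one must rule out a dip of $\phi$ below zero somewhere in between. Working with the strict convexity of $\phi'$ (via strict monotonicity of $h''$) rather than with $\phi$ directly handles both regions uniformly, which is why the proof is naturally phrased in terms of $h''$.
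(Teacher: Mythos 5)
Your proof is correct, and it arrives at exactly the same quadratic majorant as the paper, but by a different route. The paper's proof writes $f(y)=(\alpha+y)^{-\beta}$ via Taylor's formula at $c$ with the integral form of the remainder, $f(Y)=f(c)+f'(c)(Y-c)+(Y-c)^2\int_0^1(1-t)f''((1-t)c+tY)\,\dd t$, and then uses that $f''$ is decreasing together with $Y\geq 0$ to bound the remainder factor by $\int_0^1(1-t)f''((1-t)c)\,\dd t$, which evaluates to your constant $K=\frac{1}{c^2}\bigl(\frac{1}{\alpha^\beta}-\frac{\alpha+c(\beta+1)}{(\alpha+c)^{\beta+1}}\bigr)$; taking expectations then gives \eqref{e3649322} directly. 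You instead define the quadratic $q$ by tangency at $c$ plus interpolation at $0$ (which forces the same $K$, since strict convexity of $f$ even guarantees $K>0$) and prove the pointwise bound $f\leq q$ on $[0,\infty)$ by a Rolle argument combined with strict convexity of $\phi'=q'-f'$ (coming from strict monotonicity of $f''$), which correctly handles both $[0,c]$ and the unbounded region $[c,\infty)$. The paper's computation is shorter and produces the optimal curvature constant automatically as an integral, while your argument is more elementary in that it avoids the integral-remainder form of Taylor's theorem, at the cost of the separate sign analysis; both use the hypotheses $Y\geq 0$ and monotonicity of $f''$ in an essential way, and your expectation step (with $\EE(Y-c)^2=\sigma^2+(c-\mu)^2$, and the trivial case of infinite variance) matches the paper's.
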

\begin{proof}
For $x\in(-\alpha,\infty)$, let 
$f(x)=\frac{1}{(\alpha+x)^\beta}$. The first two derivatives of $f$ 
are given by $f'(x)=\frac{-\beta}{(\alpha+x)^{\beta+1}}$ and
$f''(x)=\frac{\beta(\beta+1)}{(\alpha+x)^{\beta+2}}$. Therefore, 
$f''$ is decreasing and, using Taylor's formula, we obtain 
\begin{align*}
\EE f(Y)
&=f(c)+(\mu-c)f'(c)
+\EE\Big((Y-c)^2\int_0^1(1-t)f''((1-t)c+tY)\,\dd t\Big)\\
&\leq f(c)+(\mu-c)f'(c)+\EE(Y-c)^2\int_0^1(1-t)f''((1-t)c)\,\dd t\\
&= \frac{1}{(\alpha+c)^\beta}
+\frac{\beta(c-\mu)}{(\alpha+c)^{\beta+1}}
+(\sigma^2+(c-\mu)^2)
\int_0^1\frac{t\beta(\beta+1)}{(\alpha+tc)^{\beta+2}}\,\dd t,
\end{align*}
where 
\begin{align*}
\int_0^1\frac{t\beta(1+\beta)}{(\alpha+tc)^{\beta+2}}\,\dd t
&=\frac{1}{c^2}\Big(\frac{1}{\alpha^\beta}
-\frac{\alpha+c(\beta+1)}{(\alpha+c)^{\beta+1}}\Big).
\qedhere
\end{align*}
\end{proof}

\begin{remark} 
Let the assumptions of Lemma \ref{l296978} hold. 
The optimal constant $c$ in \eqref{e3649322}
is $c=\mu+\frac{\sigma^2}{\mu}$, giving
\begin{align}\label{e8616475}
\EE \frac{1}{(\alpha+Y)^\beta}
&\leq \frac{\kappa^\beta\mu^2+\sigma^2/\alpha^\beta}{\mu^2+\sigma^2}, 
\quad
\mbox{ where }\kappa=\frac{\mu}{\sigma^2+\mu(\alpha+\mu)}.
\end{align}
This was shown by \citet{MR816102} using a recursion 
and the Jensen inequality in the case $\beta\in\NN$. 
Another proof was given by \citet{MR1072493} avoiding a recursion. 
We prefer \eqref{e3649322} over \eqref{e8616475}, 
since \eqref{e3649322} is valid for all positive $c$, which 
simplifies the proof of the next corollary. 
\end{remark}

\begin{corollary}\label{c9724722}
Let $\alpha\in(0,\infty)$, $\beta\in(0,1]$ and
$Y$ be a non-negative random variable with 
$\EE Y^2<\infty$.  
If $\Var Y \leq \EE Y=:\mu$, then 
\begin{align}\label{e97271}
\EE \frac{1}{(\alpha+Y)^\beta}
&\leq \frac{\widetilde{C}(\alpha,\beta)}{(\alpha+\mu)^\beta},
\end{align}
where 
$\widetilde{C}(\alpha,\beta)
=\sup_{x\in[0,\infty)} h(x)$ and 
$h(x):=h_{\alpha,\beta}(x)
:=\frac{(\alpha+x)^\beta}{1+x}(\frac{x}{(\alpha+1+x)^\beta}
+\frac{1}{\alpha^\beta})$.
For $\alpha$ and $\beta$ being fixed, the factor 
$\widetilde{C}(\alpha,\beta)$ in \eqref{e97271} 
cannot be made smaller. 

\end{corollary}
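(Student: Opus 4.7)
The plan is to apply Lemma \ref{l296978} with the specific choice $c = 1+\mu$, so that $c-\mu = 1$ and the prefactor $\sigma^2 + (c-\mu)^2$ appearing in \eqref{e3649322} equals $\sigma^2 + 1$, which under the hypothesis $\sigma^2 \leq \mu$ is at most $\mu+1 = c$. Before invoking this estimate I would verify that the accompanying factor
\begin{align*}
\frac{1}{\alpha^\beta} - \frac{\alpha + c(\beta+1)}{(\alpha+c)^{\beta+1}}
\end{align*}
is non-negative for $c\geq 0$, so that inflating $\sigma^2+1$ to $c$ only weakens the inequality in the correct direction. This is immediate: the expression vanishes at $c=0$, and its derivative in $c$ simplifies to $\frac{\beta(\beta+1)c}{(\alpha+c)^{\beta+2}}\geq 0$. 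The payoff is that the $1/c^2$ prefactor collapses to $1/c$.

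The remainder is a routine algebraic reduction. Expanding the right-hand side of \eqref{e3649322} with this substitution, and combining the $(\alpha+c)^{-\beta-1}$ coefficients via $\beta - (\beta+1) = -1$, I obtain $\frac{\alpha+c-1}{(\alpha+c)^{\beta+1}} - \frac{\alpha}{c(\alpha+c)^{\beta+1}} + \frac{1}{c\alpha^\beta}$; the identity $c(\alpha+c-1) - \alpha = (c-1)(\alpha+c)$ then fuses the first two terms into $\frac{c-1}{c(\alpha+c)^\beta}$. Substituting $c = 1+\mu$ yields
\begin{align*}
\EE \frac{1}{(\alpha+Y)^\beta}
\leq \frac{1}{1+\mu}\left(\frac{\mu}{(\alpha+1+\mu)^\beta} + \frac{1}{\alpha^\beta}\right),
\end{align*}
and multiplying by $(\alpha+\mu)^\beta$ produces exactly $h_{\alpha,\beta}(\mu) \leq \widetilde{C}(\alpha,\beta)$, proving \eqref{e97271}.

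For the sharpness claim, I would fix $\mu \in [0,\infty)$ and take the two-point random variable with $P(Y=0) = \frac{1}{1+\mu}$ and $P(Y=1+\mu) = \frac{\mu}{1+\mu}$. A direct computation gives $\EE Y = \mu$ and $\EE Y^2 = \mu(1+\mu)$, hence $\Var Y = \mu$, so the hypothesis $\Var Y \leq \EE Y$ holds (with equality), and $(\alpha+\mu)^\beta\,\EE(\alpha+Y)^{-\beta}$ evaluates to exactly $h_{\alpha,\beta}(\mu)$. Varying $\mu$ over $[0,\infty)$, one sees that any constant that could replace $\widetilde{C}(\alpha,\beta)$ in \eqref{e97271} must majorize $h_{\alpha,\beta}(\mu)$ for every $\mu$, hence must be at least $\sup_{x\geq 0} h_{\alpha,\beta}(x) = \widetilde{C}(\alpha,\beta)$. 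The main obstacle is really just spotting the choice $c=1+\mu$: it is the unique value making $(c-\mu)^2 = 1$, which allows $\sigma^2\leq\mu$ to reduce $\sigma^2+(c-\mu)^2$ to $c$ and triggers the telescoping that matches $h_{\alpha,\beta}(\mu)$ exactly, which in turn is what enables the sharp two-point example.
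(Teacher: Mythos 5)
Your proposal is correct and follows essentially the same route as the paper: apply Lemma \ref{l296978} with $c=1+\mu$, use $\sigma^2\leq\mu$ (together with the non-negativity of the factor multiplying $\sigma^2+(c-\mu)^2$) to arrive at $\EE(\alpha+Y)^{-\beta}\leq h_{\alpha,\beta}(\mu)/(\alpha+\mu)^\beta$, and prove sharpness with the same two-point distribution on $\{0,1+\mu\}$ having $\Var Y=\mu$. The only difference is that you spell out the algebraic simplification and the sign check that the paper leaves implicit, which is a faithful filling-in rather than a new argument.
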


\begin{proof}
From  \eqref{e3649322} with $c=1+\mu$, we obtain that 
\begin{align}
\EE \frac{1}{(\alpha+Y)^\beta}
&\leq \frac{1}{(\alpha+\mu)^\beta}h(\mu), \label{e926493}
\end{align}
which implies \eqref{e97271}.

To prove the optimality of the factor 
$\widetilde{C}(\alpha,\beta)$, we consider $Y$ satisfying
$P(Y=d)=p=1-P(Y=0)$, 
where $d\in(0,\infty)$ and $p\in[0,1]$. 
Then $\mu=pd$ and $\Var X
=\mu(d-\mu)$.  
We assume that $\mu=\Var Y$, that is, $d=1+\mu$, 
$p=\frac{\mu}{1+\mu}$, $1-p=\frac{1}{1+\mu}$. 
Consequently, in \eqref{e926493}, equality holds. 
Since $\mu\in(0,\infty)$ can be chosen arbitrarily,
$\widetilde{C}(\alpha,\beta)$ cannot be made smaller 
in \eqref{e97271}.
\end{proof}
\begin{remark}\label{r3286375}
Let $\alpha\in(0,\infty)$, $\beta\in(0,1]$
and $\widetilde{C}(\alpha,\beta)$ be defined as in 
Corollary \ref{c9724722}. Then 
$\widetilde{C}(\alpha,\beta)$ is decreasing in $\alpha$ 
and satisfies 
$1\leq \widetilde{C}(\alpha,\beta)\leq 1+\frac{\beta}{\alpha}$. 

\end{remark}
\begin{proof}
Let 
$h_{\alpha,\beta}(x)$
for $x\in[0,\infty)$, $\alpha\in(0,\infty)$ and $\beta\in(0,1]$ 
be as in Corollary \ref{c9724722}.
Then 
\begin{align*}
h_{\alpha,\beta}(x)
&=\frac{1}{1+x}\Big(x\Big(1-\frac{1}{\alpha+1+x}\Big)^\beta
+\Big(1+\frac{x}{\alpha}\Big)^\beta\Big),\\
\frac{\partial }{\partial\alpha}h_{\alpha,\beta}(x)
&=\frac{x\beta}{1+x}\Big(
\frac{1}{(1-\frac{1}{\alpha+1+x})^{1-\beta}(\alpha+1+x)^2}
-\frac{1}{(1+\frac{x}{\alpha})^{1-\beta}\alpha^2}\Big)\\
&= \frac{x\beta}{(1+x)(\alpha+x)^{1-\beta}}\Big(
\frac{1}{(\alpha+1+x)^{1+\beta}}-\frac{1}{\alpha^{1+\beta}}\Big)
\leq 0.
\end{align*}
For $\alpha_1,\alpha_2\in(0,\infty)$ with $\alpha_2\leq \alpha_1$, 
we obtain 
$h_{\alpha_1,\beta}(x)
\leq h_{\alpha_2,\beta}(x)
\leq 
\widetilde{C}(\alpha_2,\beta)$
and hence 
$\widetilde{C}(\alpha_1,\beta)
\leq\widetilde{C}(\alpha_2,\beta)$. 
On the other hand, we have $h_{\alpha,\beta}(0)=1$ and by using 
that $(1+y)^\beta\leq 1+\beta y$ for $y\in[-1,\infty)$, we obtain
\begin{align*}
h_{\alpha,\beta}(x)
&\leq\frac{1}{1+x}\Big(x\Big(1-\frac{\beta}{\alpha+1+x}\Big)
+1+\frac{\beta x}{\alpha}\Big)\\
&=1+\frac{\beta x}{1+x}\Big(\frac{1}{\alpha}-\frac{1}{\alpha+1+x}\Big)
\leq 1+\frac{\beta x}{\alpha(1+x)}
\leq 1+\frac{\beta}{\alpha}.\qedhere
\end{align*}
\end{proof}

We now show that Theorem \ref{t287432} is a consequence of the 
following more general theorem.

\begin{theorem}\label{t9117372}
Let the $X_{j,r}$ for $j,r\in\set{n}$  
be $\ZZ$-valued and $\widetilde{\nu}(j,k,r,s)\in[0,1]$ satisfy
\begin{align*} 
\nu(j,k,r,s)
:=1-\dtv{R(j,k,r,s)}{\dirac_1*R(j,k,r,s)}
\geq\widetilde{\nu}(j,k,r,s)
\end{align*}
for all $(j,k),(r,s)\in\set{n}_{\neq}^2$.
Let $\varphi\in\set{n}_{\neq}^n$ be arbitrary and
\begin{align}\label{e82663}
\xi_q:=\xi_q(\varphi)
:=\frac{1}{mn(n-1)}\sum_{\ell\in\set{m}}
\sum_{(r,s)\in\set{n}_{\neq}^2}
(\widetilde{\nu}(\varphi(2\ell-1),\varphi(2\ell),r,s))^q
\quad\text{ for } q\in\{1,2\}. 
\end{align}
Let $\varepsilon\in(0,1]$ satisfy
$\xi_2-\xi_1^2\leq \varepsilon \xi_1$. Then
\begin{align}
\dtv{S_n}{1+S_n}
&\leq \frac{C(\frac{1}{8\varepsilon})}{\sqrt{\pi}}
\frac{1}{\sqrt{\frac{1}{8}+\floor{\frac{m}{2}}\xi_1}}. 
\label{e224523}
\end{align}
\end{theorem}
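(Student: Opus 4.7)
The plan is to apply Lemma~\ref{l298735765} and Lemma~\ref{l32521} to reduce to an expectation over a uniform random permutation, then invoke Corollary~\ref{c9724722} after establishing a variance bound on the relevant pair-statistic. First, I would apply Lemma~\ref{l298735765} with the given $\varphi$ to write $P^{S_n}=\frac{1}{n!}\sum_{r\in\set{n}_{\neq}^n}P_r$, where $P_r=(\bigast_{\ell=1}^{m}R_\ell(r))*Q_{\varphi(n),r(n)}^{*(n-2m)}$ and $R_\ell(r):=R(\varphi(2\ell-1),\varphi(2\ell),r(2\ell-1),r(2\ell))$. The triangle inequality for the total variation norm and the contractivity of convolution give $\dtv{S_n}{1+S_n}\leq\EE \dtv{P_r}{\dirac_1*P_r}$ for $r$ uniform on $\set{n}_{\neq}^n$. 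For each $r$, $P_r$ is the distribution of a sum of $m$ independent components with laws $R_\ell(r)$ plus nonnegative $Q$-contributions, so inequality~\eqref{e79825766} of Lemma~\ref{l32521} together with $1-\dtv{R_\ell(r)}{1+R_\ell(r)}\geq\widetilde{\nu}_\ell(r):=\widetilde{\nu}(\varphi(2\ell-1),\varphi(2\ell),r(2\ell-1),r(2\ell))$ yields
\[
\dtv{P_r}{\dirac_1*P_r}\leq\sqrt{2/\pi}\,(1/4+W(r))^{-1/2},\qquad W(r):=\sum_{\ell=1}^{m}\widetilde{\nu}_\ell(r).
\]
Since $(r(2\ell-1),r(2\ell))$ has uniform marginal on $\set{n}_{\neq}^2$, one has $\EE W=m\xi_1$.

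The core technical step is to prove the variance bound $\Var W\leq 2\varepsilon m\xi_1$, equivalently $\Var W\leq 2m(\xi_2-\xi_1^2)$ under the hypothesis. Writing $a_\ell:=\EE\widetilde{\nu}_\ell(r)$ and using Cauchy-Schwarz $\sum_\ell a_\ell^2\geq m\xi_1^2$, the diagonal contribution is controlled: $\sum_\ell\Var\widetilde{\nu}_\ell(r)=\sum_\ell(\EE\widetilde{\nu}_\ell^2-a_\ell^2)\leq m\xi_2-m\xi_1^2\leq\varepsilon m\xi_1$. What remains is the off-diagonal sum $\sum_{\ell\neq\ell'}\mathrm{Cov}(\widetilde{\nu}_\ell(r),\widetilde{\nu}_{\ell'}(r))$; I would exploit that for $\ell\neq\ell'$ the four-tuple $(r(2\ell-1),r(2\ell),r(2\ell'-1),r(2\ell'))$ is uniform on ordered 4-tuples of distinct indices, expanding $\EE\widetilde{\nu}_\ell\widetilde{\nu}_{\ell'}$ as a sum over such 4-tuples, and comparing it to the unrestricted product $a_\ell a_{\ell'}$ via an explicit overlap correction.

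With the variance bound in hand, I would apply Corollary~\ref{c9724722} to $Y=W(r)/(2\varepsilon)$ with $\beta=1/2$ and $\alpha=1/(8\varepsilon)$; the condition $\Var Y\leq\EE Y$ is exactly the variance estimate, so the corollary yields $\EE(\alpha+Y)^{-1/2}\leq C(1/(8\varepsilon))/\sqrt{\alpha+\EE Y}$. Rewriting gives $\EE(1/4+W(r))^{-1/2}\leq C(1/(8\varepsilon))/\sqrt{1/4+m\xi_1}$, and combining with the per-permutation smoothness estimate above yields $\dtv{S_n}{1+S_n}\leq C(1/(8\varepsilon))/(\sqrt{\pi}\sqrt{1/8+m\xi_1/2})$; finally, using $m/2\geq\floor{m/2}$ to weaken the denominator to $\sqrt{1/8+\floor{m/2}\xi_1}$ gives~\eqref{e224523}.

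The hard part will be the covariance estimate in the variance bound: under the random permutation, the pair-statistics $\widetilde{\nu}_\ell(r)$ and $\widetilde{\nu}_{\ell'}(r)$ can be genuinely positively correlated (small examples with $n=4$, $m=2$ even exhibit perfect correlation), so the aggregation over all $\binom{m}{2}$ pairs must be done carefully, exploiting the explicit 4-tuple distribution, to extract the factor-of-two slack that yields $\Var W\leq 2m(\xi_2-\xi_1^2)$.
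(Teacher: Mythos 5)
Your opening reduction is the same as the paper's: Lemma~\ref{l298735765}, the triangle inequality, and Lemma~\ref{l32521} do give $\dtv{S_n}{1+S_n}\leq\sqrt{2/\pii}\,\EE(1/4+W)^{-1/2}$ with $W=\sum_{\ell=1}^m\widetilde{\nu}(\varphi(2\ell-1),\varphi(2\ell),\pi(2\ell-1),\pi(2\ell))$, and the final appeal to Corollary~\ref{c9724722} is also in the spirit of the paper. But the step you yourself flag as ``the core technical step'' --- the variance bound $\Var W\leq 2m(\xi_2-\xi_1^2)$ (equivalently $\Var(W/(2\varepsilon))\leq\EE(W/(2\varepsilon))$) for the \emph{dependent} permutation statistic $W$ --- is not proved, and it is not a routine ``overlap correction''. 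Pairwise Cauchy--Schwarz gives $\sum_{\ell\neq\ell'}\mathrm{Cov}(\widetilde{\nu}_\ell,\widetilde{\nu}_{\ell'})\leq(m-1)\sum_\ell\Var\widetilde{\nu}_\ell$, which settles only $m=2$; for $m\geq 3$ one needs a genuine combinatorial/spectral analysis of the covariances under the random permutation (separating degree-type and matching-detector-type components), and the target constant is already attained with equality at $n=4$, so there is no slack to waste. Nothing in your sketch delivers this, and the hypothesis $\xi_2-\xi_1^2\leq\varepsilon\xi_1$ of the theorem does not by itself control $\Var W$, because the $\widetilde{\nu}_\ell(\pi)$ can be strongly positively correlated, as you note. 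So as written the proposal has a gap precisely where the theorem's difficulty lies.

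The paper's proof avoids this issue entirely, and the detour is worth understanding because it is the reason the paper develops Lemma~\ref{l3836}. After the same reduction, it writes $\EE(1/8+L)^{-1/2}$ via the integral representation \eqref{e82947}, so that only the Laplace transform $\EE\ee^{-xL}$ of the dependent sum is needed; this is a permutation average of a product, i.e.\ a generalized normalized hafnian of the matrix with entries $\exp(-\frac{x}{2}\widetilde{\nu}(\varphi(2\ell-1),\varphi(2\ell),r,s))$, and the hafnian inequality \eqref{e82776587} decouples it into a product of $m$ per-block averages, each raised to the power $1/2$. The arithmetic--geometric mean inequality then bounds this by the $\floor{m/2}$-th power of a single average, which is exactly $\EE\ee^{-xM}$ for a sum $M$ of $\floor{m/2}$ \emph{independent} identically distributed variables; inverting \eqref{e82947} and applying Corollary~\ref{c9724722} to $M/\varepsilon$ uses only $\Var M=\floor{\frac m2}(\xi_2-\xi_1^2)\leq\varepsilon\EE M$, which is immediate from independence and the hypothesis. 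Note also that the $\floor{m/2}$ in \eqref{e224523} arises structurally from this square-root/AM--GM step, not from weakening $m/2$ to $\floor{m/2}$ at the end as in your last line. To complete your route you would have to prove the variance inequality for $W$ itself, which would be a new result of comparable difficulty to (and not a corollary of) anything you cite; alternatively, adopt the Laplace-transform plus hafnian decoupling argument.
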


\begin{proof}[Proof of Theorem \ref{t287432}]
Let $\widetilde{\nu}(j,k,r,s):=\nu(j,k,r,s)$ 
for all $(j,k),(r,s)\in\set{n}_{\neq}^2$. 
From \eqref{e826628}, it follows that, using Theorem
\ref{t9117372} and its notation, \eqref{e224523} holds 
for all $\varphi\in\set{n}_{\neq}^n$. 
Therefore
\begin{align*}
\dtv{S_n}{1+S_n}
&\leq \frac{C(\frac{1}{8\varepsilon})}{\sqrt{\pi}}
\frac{1}{\sqrt{\frac{1}{8}+\floor{\frac{m}{2}}
\frac{1}{n!}\sum_{\varphi\in\set{n}_{\neq}^n}\xi_1(\varphi)}},
\end{align*}
where $\frac{1}{n!}\sum_{\varphi\in\set{n}_{\neq}^n}\xi_1(\varphi)
=\xi$.  
\end{proof}

\begin{proof}[Proof of Theorem \ref{t9117372}]
Using Lemmata \ref{l298735765} and \ref{l32521}, we obtain
for arbitrary $\varphi\in\set{n}_{\neq}^n$ that  
\begin{align*}
\lefteqn{\dtv{S_n}{S_n+1}}\\
&=\frac{1}{2}\big\|(\dirac_0-\dirac_1)*P^{S_n}\big\|\\
&=\frac{1}{2}\Big\|\frac{1}{n!}\sum_{r\in\set{n}_{\neq}^n}
(\dirac_0-\dirac_1)*
\Big(\bigast_{\ell=1}^mR(\varphi(2\ell-1),
\varphi(2\ell),r(2\ell-1),r(2\ell))
\Big)*Q_{\varphi(n),r(n)}^{*(n-2m)}\Big\| \\
&\leq \frac{1}{\,n!}\sum_{r\in\set{n}_{\neq}^n}
\frac{1}{2}\Big\|(\dirac_0-\dirac_1)*
\Big(\bigast_{\ell=1}^mR(\varphi(2\ell-1),
\varphi(2\ell),r(2\ell-1),r(2\ell))
\Big)\Big\| \\
&\leq \sqrt{\frac{2}{\pii}}
\frac{1}{n!}\sum_{r\in\set{n}_{\neq}^n}
\frac{1}{\sqrt{\frac{1}{4}+\sum_{\ell=1}^m
\nu(\varphi(2\ell-1),\varphi(2\ell),r(2\ell-1),r(2\ell))}}\\
&\leq\frac{1}{\sqrt{\pii}}
\EE\frac{1}{\sqrt{\frac{1}{8}+L(\varphi)}},
\end{align*}
where 
$L(\varphi)=\frac{1}{2}\sum_{\ell=1}^m
\widetilde{\nu}(\varphi(2\ell-1),
\varphi(2\ell),\pi(2\ell-1),\pi(2\ell))$. 
We now use the representation 
\begin{align}\label{e82947}
\frac{1}{\sqrt{b}}
=\frac{1}{\Gamma(1/2)}\int_0^\infty \frac{1}{\sqrt{x}}\ee^{-xb}\,\dd x
\end{align}
for $b\in(0,\infty)$, which implies that 
\begin{align*}
\lefteqn{
\dtv{S_n}{S_n+1}}\\
&=\frac{1}{\sqrt{\pii}}
\frac{1}{\Gamma(1/2)}\int_0^\infty 
\frac{\ee^{-x/8}}{\sqrt{x}}\EE\ee^{-xL(\varphi)}\,\dd x\\
&=\frac{1}{\sqrt{\pii}}
\frac{1}{\Gamma(1/2)}\int_0^\infty \frac{\ee^{-x/8}}{\sqrt{x}}
\frac{1}{n!}\sum_{r\in\set{n}_{\neq}^{n}}
\prod_{\ell=1}^m\exp\Bigl(-\frac{x}{2}
\widetilde{\nu}(\varphi(2\ell-1),
\varphi(2\ell),r(2\ell-1),r(2\ell))\Bigr)\,\dd x.
\end{align*}
Lemma \ref{l3836} implies that
\begin{align}
\lefteqn{\dtv{S_n}{S_n+1}}\nonumber\\
&\leq \frac{1}{\sqrt{\pii}}
\frac{1}{\Gamma(1/2)}\int_0^\infty \frac{\ee^{-x/8}}{\sqrt{x}}
\prod_{\ell=1}^m\Big(\frac{1}{n(n-1)}\sum_{(r,s)\in\set{n}_{\neq}^2}
\exp(-x\widetilde{\nu}(\varphi(2\ell-1),
\varphi(2\ell),r,s))\Big)^{1/2}\,\dd x.
\label{e2826486}
\end{align}
Using \eqref{e2826486},  
the inequality between arithmetic and geometric means and 
\eqref{e82947} again, 
we obtain
\begin{align*}
\lefteqn{\dtv{S_n}{S_n+1}}\\
&\leq \frac{1}{\sqrt{\pii}}
\frac{1}{\Gamma(1/2)}\int_0^\infty \frac{\ee^{-x/8}}{\sqrt{x}}
\Big(\frac{1}{mn(n-1)}\\
&\quad{}\times
\sum_{\ell\in\set{m}}
\sum_{(r,s)\in\set{n}_{\neq}^2}
\exp(-x\widetilde{\nu}(\varphi(2\ell-1),
\varphi(2\ell),r,s))\Big)^{\floor{m/2}}\,\dd x\\
&=\frac{1}{\sqrt{\pii}}
\frac{1}{\Gamma(1/2)}\int_0^\infty \frac{\ee^{-x/8}}{\sqrt{x}}
\EE\exp(-xM)\,\dd x\\
&=\frac{1}{\sqrt{\pii}}\EE\frac{1}{\sqrt{\frac{1}{8}+M}}, 
\end{align*}
where 
$M
:=M(\varphi)
:=\sum_{j=1}^{\floor{m/2}}M_j$ and the 
$M_j:=M_j(\varphi)$ for $j\in\{1,\dots,\floor{m/2}\}$ 
are independent random variables with distribution 
\begin{align*}
P^{M_j}
=\frac{1}{m n(n-1)}\sum_{\ell\in\set{m}}
\sum_{(r,s)\in\set{n}_{\neq}^2}
\dirac_{\widetilde{\nu}(\varphi(2\ell-1),\varphi(2\ell),r,s)}.
\end{align*}
We clearly have 
$\EE M=\floor{m/2}\xi_1$ and 
$\Var(\frac{1}{\varepsilon}M)
\leq \EE(\frac{1}{\varepsilon}M)$. 
Therefore, Corollary \ref{c9724722} yields
\begin{align*}
\dtv{S_n}{S_n+1}
&\leq \frac{1}{\sqrt{\varepsilon\pii}}
\EE\frac{1}{\sqrt{\frac{1}{8\varepsilon}+\frac{1}{\varepsilon}
M}}
\leq \frac{1}{\sqrt{\varepsilon\pii}}
\frac{\widetilde{C}(\frac{1}{8\varepsilon},\frac{1}{2})}{
\sqrt{\frac{1}{8\varepsilon}+\EE(\frac{1}{\varepsilon} M)}}
=\frac{C(\frac{1}{8\varepsilon})}{\sqrt{\pii}}
\frac{1}{\sqrt{\frac{1}{8}+\floor{\frac{m}{2}}\xi_1}}.\qedhere
\end{align*}
\end{proof}

Theorem \ref{t9117371} is an easy consequence of 
the following more general theorem. 
\begin{theorem}\label{t22628362}
Let the  $X_{j,r}$ for $j,r\in\set{n}$ be $\RR$-valued, 
$t\in[0,\infty)$ and
$\widetilde{\zeta}(j,k,r,s;t)\in[0,1]$ satisfy
\begin{align*}
\zeta(j,k,r,s;t)
:=1-\conc{R(j,k,r,s)}{t}\geq\widetilde{\zeta}(j,k,r,s;t)
\end{align*}
for all $(j,k),(r,s)\in\set{n}_{\neq}^2$. 
Let $\varphi\in\set{n}_{\neq}^n$ be arbitrary and
\begin{align*} 
\eta_q(t):=\eta_q(\varphi,t)
:=\frac{1}{mn(n-1)}\sum_{\ell\in\set{m}}
\sum_{(r,s)\in\set{n}_{\neq}^2}
(\widetilde{\zeta}(\varphi(2\ell-1),\varphi(2\ell),r,s;t))^q
\quad\text{ for }q\in\{1,2\}. 
\end{align*}
Let $\varepsilon\in(0,1]$ satisfy
$\eta_2(t)-\eta_1(t)^2\leq\varepsilon\eta_1(t)$. Then
\begin{align*}
\conc{S_n}{t}
&\leq \frac{C(\frac{1}{8\varepsilon})}{\sqrt{\pi}}
\frac{1}{\sqrt{\frac{1}{8}+\floor{\frac{m}{2}}\eta_1(t)}}.
\end{align*}
\end{theorem}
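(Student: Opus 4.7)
The proof plan is to mimic the argument given for Theorem \ref{t9117372} almost step by step, replacing the total variation distance $\dtv{\cdot}{1+\cdot}$ by the L\'{e}vy concentration function $\conc{\cdot}{t}$. First I would fix an arbitrary $\varphi\in\set{n}_{\neq}^n$ and apply Lemma \ref{l298735765} to represent $P^{S_n}$ as $\frac{1}{n!}\sum_{r\in\set{n}_{\neq}^n}\mu_r$, where $\mu_r=\bigl(\bigast_{\ell=1}^{m}R(\varphi(2\ell-1),\varphi(2\ell),r(2\ell-1),r(2\ell))\bigr)*Q_{\varphi(n),r(n)}^{*(n-2m)}$. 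Using the two elementary facts that $\conc{\cdot}{t}$ is subadditive under averaging (i.e.\ $\conc{\frac{1}{n!}\sum_{r}\mu_r}{t}\leq\frac{1}{n!}\sum_{r}\conc{\mu_r}{t}$) and non-increasing under convolution with an additional probability measure (so the factor $Q_{\varphi(n),r(n)}^{*(n-2m)}$ can be dropped), I obtain
\begin{align*}
\conc{S_n}{t}
&\leq\frac{1}{n!}\sum_{r\in\set{n}_{\neq}^n}
\conc{\bigast_{\ell=1}^{m}R(\varphi(2\ell-1),\varphi(2\ell),r(2\ell-1),r(2\ell))}{t}.
\end{align*}

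Second, I would apply inequality \eqref{e79825767} of Lemma \ref{l32521} to each inner convolution (with $n$ there equal to $m$) and then use $\zeta\geq\widetilde{\zeta}$ together with $\sqrt{2/\pi}/\sqrt{1/4+a}=(1/\sqrt{\pi})/\sqrt{1/8+a/2}$ to get
\begin{align*}
\conc{S_n}{t}
&\leq\frac{1}{\sqrt{\pi}}\,\EE\frac{1}{\sqrt{\frac{1}{8}+L(\varphi,t)}},
\quad
L(\varphi,t):=\frac{1}{2}\sum_{\ell=1}^{m}\widetilde{\zeta}(\varphi(2\ell-1),\varphi(2\ell),\pi(2\ell-1),\pi(2\ell);t).
\end{align*}
Using the integral representation \eqref{e82947} to linearize $1/\sqrt{\cdot}$, I can exchange the expectation with the integral and obtain a sum over $r\in\set{n}_{\neq}^n$ of products $\prod_{\ell=1}^{m}\exp\bigl(-\tfrac{x}{2}\widetilde{\zeta}(\varphi(2\ell-1),\varphi(2\ell),r(2\ell-1),r(2\ell);t)\bigr)$. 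This sum is exactly $n!$ times a normalized generalized hafnian of a $3$-dimensional matrix whose entries lie in $[0,1]$, and Lemma \ref{l3836} applied to it yields
\begin{align*}
\conc{S_n}{t}
&\leq\frac{1}{\sqrt{\pi}\,\Gamma(1/2)}\int_0^\infty\frac{\ee^{-x/8}}{\sqrt{x}}
\prod_{\ell=1}^{m}\Bigl(\frac{1}{n(n-1)}\sum_{(r,s)\in\set{n}_{\neq}^2}\exp\bigl(-x\widetilde{\zeta}(\varphi(2\ell-1),\varphi(2\ell),r,s;t)\bigr)\Bigr)^{1/2}\dd x.
\end{align*}

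Third, I would apply the AM--GM inequality to bound the product by the $\floor{m/2}$-th power of the average, re-introduce the integral via \eqref{e82947}, and recognize the resulting expression as $(1/\sqrt{\pi})\,\EE (1/\sqrt{1/8+M})$, where $M=\sum_{j=1}^{\floor{m/2}}M_j$ is a sum of $\floor{m/2}$ i.i.d.\ copies of a bounded $[0,1]$-valued random variable whose distribution is supported on the values $\widetilde{\zeta}(\varphi(2\ell-1),\varphi(2\ell),r,s;t)$. By construction $\EE M=\floor{m/2}\,\eta_1(t)$ and $\Var M=\floor{m/2}(\eta_2(t)-\eta_1(t)^2)\leq\floor{m/2}\,\varepsilon\,\eta_1(t)$, so $\Var(M/\varepsilon)\leq\EE(M/\varepsilon)$. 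Finally, I apply Corollary \ref{c9724722} with $\alpha=1/(8\varepsilon)$, $\beta=1/2$, and $Y=M/\varepsilon$, obtaining the claimed bound after noting $\widetilde{C}(1/(8\varepsilon),1/2)=C(1/(8\varepsilon))$ as recorded in Remark \ref{r297582}.

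I do not expect a serious obstacle: the only new ingredient compared to the proof of Theorem \ref{t9117372} is the replacement of the convolution-smoothing inequality $\frac{1}{2}\|(\dirac_0-\dirac_1)*\mu*\nu\|\leq\frac{1}{2}\|(\dirac_0-\dirac_1)*\mu\|$ by its concentration-function analogue $\conc{\mu*\nu}{t}\leq\conc{\mu}{t}$, which is standard and holds for any probability measure $\nu$. Everything downstream (integral representation, hafnian bound, AM--GM, the moment inequality of Corollary \ref{c9724722}) transfers verbatim, with $\nu$, $\xi_q$, $\widetilde{\nu}$ replaced by $\zeta(\bfcdot\,;t)$, $\eta_q(t)$, $\widetilde{\zeta}(\bfcdot\,;t)$.
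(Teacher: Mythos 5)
Your proposal is correct and is precisely the argument the paper intends: the paper omits the proof of Theorem \ref{t22628362}, stating it is analogous to that of Theorem \ref{t9117372}, and your write-up carries out exactly that analogy, with the only new ingredients being the standard facts $\conc{\frac{1}{n!}\sum_r\mu_r}{t}\leq\frac{1}{n!}\sum_r\conc{\mu_r}{t}$ and $\conc{\mu*\nu}{t}\leq\conc{\mu}{t}$ in place of the total variation estimates, followed by the same chain of Lemma \ref{l32521} (inequality \eqref{e79825767}), the representation \eqref{e82947}, Lemma \ref{l3836}, AM--GM, and Corollary \ref{c9724722}. All steps check out, including the variance computation giving $\Var(M/\varepsilon)\leq\EE(M/\varepsilon)$ and the identification $\widetilde{C}(\frac{1}{8\varepsilon},\frac{1}{2})=C(\frac{1}{8\varepsilon})$.
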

The proofs of Theorems \ref{t9117371} and \ref{t22628362} are 
analogous to those of  
Theorems \ref{t287432} and \ref{t9117372} and are therefore omitted. 
For the proof of Theorem \ref{t98374587}, we need the following lemma. 
\begin{lemma}\label{l2320843}
Let $a,b,c,d\in[0,1]$, 
$R=\frac{1}{2}(\Be(a)*\Be(b)+\Be(c)*\Be(d))$. 
Let 
$u=a+b+c+d$, $v=ab+cd$, and $w=\frac{u}{2}-v$.
Then $\sigma^2=\frac{u}{2}+v-\frac{u^2}{4}\in[0,1]$ 
is the variance of $R$, $w=R(\{1\})\in[0,1]$,
\begin{align}
\dtv{R}{\delta_1*R}
&=\frac{1}{2}(1-w)+\frac{1}{4}|2-4w-v|+\frac{1}{4}|2w-v|,
\label{e22579}\\
\conc{R}{0}&=\max\Big\{1-w-\frac{v}{2},w,\frac{v}{2}\Big\}
\label{e22580}
\end{align}
and
\begin{gather}
\min\{w,1-w\}
\leq1-\dtv{R}{\delta_1*R}
\leq\min\{2w,1-w\},\label{e87165386}\\
\frac{\sigma^2}{2}
\leq1-\conc{R}{0}
\leq 2\sigma^2.\label{e287211}
\end{gather}
\end{lemma}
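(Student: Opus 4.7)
The plan is to first unpack $R$ as a distribution on $\{0,1,2\}$. Expanding $\Be(x)\ast\Be(y) = (1-x)(1-y)\delta_0 + (x+y-2xy)\delta_1 + xy\,\delta_2$ and averaging the two summands gives $R(\{0\}) = 1-w-v/2$, $R(\{1\}) = w$, $R(\{2\}) = v/2$; nonnegativity of these masses yields $w, v/2, 1-w-v/2\in[0,1]$ and in particular $w = R(\{1\})$. Computing $\EE Z = w+v = u/2$ and $\EE Z^2 = w+2v$ for $Z\sim R$ gives $\sigma^2 = w+2v-(w+v)^2 = u/2 + v - u^2/4$, and $\sigma^2\le 1$ then follows from $\sigma^2 = p_1(1-p_1) + 4p_0 p_2 \le p_1(1-p_1) + (p_0+p_2)^2 = 1-p_1 \le 1$ via AM-GM on $4p_0 p_2$.

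Formula \eqref{e22579} comes from $2\,\dtv{R}{\delta_1\ast R} = \sum_k |R(\{k\})-R(\{k-1\})|$; the four nonzero differences sum to $(1-w)+|2w+v/2-1|+|v/2-w|$, which matches the claim after using $|2w+v/2-1|=\tfrac{1}{2}|4w+v-2|$ and $|v/2-w|=\tfrac{1}{2}|v-2w|$. Formula \eqref{e22580} is the definition $\conc{R}{0}=\max\{p_0,p_1,p_2\}$ applied to the three point masses.

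For \eqref{e87165386} I would case-split on the signs of $2-4w-v$ and $2w-v$. In each of the four resulting regions \eqref{e22579} collapses so that $1-\dtv{R}{\delta_1\ast R}$ equals one of $w+v/2$, $2w$, $1-w$, or $1-v/2$; the defining sign inequalities in each region pin down which of $\{w,1-w\}$ and $\{2w,1-w\}$ give the respective minima, and a short verification produces the two-sided bound.

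The main obstacle is \eqref{e287211}. The upper bound $1-\conc{R}{0}\le 2\sigma^2$ reduces, via case analysis on which $p_i$ is maximal, to sign-of-coefficient checks on $(p_0,p_1,p_2)$; for instance if $p_0$ is max then $p_0\ge 1/3$ and $p_1\le 1/2$, making both summands in $2\sigma^2 - (1-p_0) = p_1(1-2p_1) + p_2(8p_0-1)$ nonnegative. The reverse inequality $\sigma^2/2\le 1-\conc{R}{0}$ is the crux, since it fails for generic probability vectors on $\{0,1,2\}$ and must therefore exploit the mixture-of-Bernoulli-convolutions structure of $R$. The discriminant identity $\beta^2 - 4\alpha\gamma = (a-b)^2\ge 0$ (with $\alpha=ab$, $\beta=a+b-2ab$, $\gamma=(1-a)(1-b)$ and analogous primed quantities from $c,d$) gives $p_1\ge \sqrt{\alpha\gamma}+\sqrt{\alpha'\gamma'}$, and squaring produces $4p_0 p_2 - p_1^2 \le (\sqrt{\alpha\gamma'}-\sqrt{\alpha'\gamma})^2 \le \alpha\gamma'+\alpha'\gamma \le 2\min\{p_0,p_2\}$. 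Substituting into $\sigma^2 = p_1(1-p_1)+4p_0 p_2$ gives $\sigma^2\le p_1+2\min\{p_0,p_2\}$; combined with the elementary $\sigma^2\le 1-p_1$ for the $p_1$-max case, and the observation that $p_1+2p_2\le 2(p_1+p_2) = 2(1-p_0)$ when $p_0$ is max (symmetrically for $p_2$-max), a three-case analysis yields $\sigma^2\le 2(1-\conc{R}{0})$.
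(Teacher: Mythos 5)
Your proof is correct, but it takes a genuinely different route through the two concentration inequalities than the paper does. The paper, after the same identification $R=(1-w-\frac{v}{2})\dirac_0+w\dirac_1+\frac{v}{2}\dirac_2$ (from which \eqref{e22579} and \eqref{e22580} are read off exactly as you do), works entirely in the $(u,v)$ parametrization: it derives the structural bounds $\max\{u-2,0\}\le v\le\min\{u^2/4,\,2-u+u^2/4\}$ from the Bernoulli form (via $(a-b)^2+(c-d)^2\ge0$ and its complemented analogue) and then obtains \eqref{e87165386} and both halves of \eqref{e287211} by case analysis in $u$ and $v$; in particular the inequality $1-\conc{R}{0}\le 2\sigma^2$ is reduced there to $\min\{\frac{u}{2}-\frac{5}{2}v,\,1-\frac{u}{2}-v,\,1-\frac{5}{2}v\}\le u(1-\frac{u}{2})$ using $v\ge\max\{u-2,0\}$. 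You instead stay in the mass coordinates $(p_0,p_1,p_2)$ and exploit the Bernoulli structure through the per-pair discriminant identity $\beta^2-4\alpha\gamma=(a-b)^2$, giving $p_1\ge\sqrt{\alpha\gamma}+\sqrt{\alpha'\gamma'}$ and hence $\sigma^2\le p_1+2\min\{p_0,p_2\}$; combined with the generic $\sigma^2\le 1-p_1$ and a split on which mass is maximal, this yields $\sigma^2\le 2(1-\conc{R}{0})$ quite cleanly, and you correctly identify that this is the one place where the mixture-of-Bernoulli-convolutions structure is indispensable (the inequality fails for general three-point laws). Your handling of \eqref{e87165386} coincides in substance with the paper's four sign regions (and indeed only nonnegativity of the three masses is needed there), as does the bound $\sigma^2\in[0,1]$. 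One small caveat: in your upper bound $1-\conc{R}{0}\le 2\sigma^2$, the case where $p_1$ is maximal is not a pure sign-of-coefficient check; when $p_1<\frac{1}{2}$ you need $(p_0+p_2)(1-2p_1)\le 8p_0p_2$, which follows from $1-2p_1=p_0+p_2-p_1\le\min\{p_0,p_2\}$ — a one-line addition, after which your argument is complete.
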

\begin{proof}
The identities \eqref{e22579} and \eqref{e22580} 
follow directly from the observation that 
\begin{align*}
R
&=\Big(1-w-\frac{v}{2}\Big)\dirac_0+w\dirac_1+\frac{v}{2}\dirac_2.
\end{align*}
In what follows, we use the simple identities
\begin{align}
1-\dtv{R}{\delta_1*R}
&=\left\{
\begin{array}{ll}
w+\frac{v}{2}, &\mbox{ if } v\leq 2w\leq 1-\frac{v}{2},\\
1-\frac{v}{2}, &\mbox{ if } 1-\frac{v}{2}\leq 2w\leq v,\\
2w,  &\mbox{ if } 2w\leq \min\{1-\frac{v}{2},v\},\\
1-w, &\mbox{ if } 2w\geq \max\{1-\frac{v}{2},v\},
\end{array}
\right. \label{e1862518}\\
1-\conc{R}{0}
&=\min\Big\{\frac{u-v}{2},1-\frac{u}{2}+v,1-\frac{v}{2}\Big\},
\nonumber
\end{align}
and the inequalities
\begin{align} \label{e21865}
\max\{u-2,0\}
\leq v
\leq \min\Big\{\frac{u^2}{4}, 2-u+\frac{u^2}{4}\Big\}.
\end{align}
The latter ones can be shown as follows. 
On the one hand, we have 
\begin{align*}
u-v
=a(1-b)+b+c(1-d)+d
\leq (1-b)+b+(1-d)+d=2,
\end{align*}
which implies the first inequality in \eqref{e21865}. 
On the other hand,
\begin{align*}
u^2-4v
&=(a+b+c+d)^2-4ab-4cd\\
&\geq a^2-2ab+b^2+c^2-2cd+d^2
=(a-b)^2+(c-d)^2\geq0 
\end{align*}
giving
$v\leq \frac{u^2}{4}$. Using the latter inequality for $1-a,1-b,1-c,1-d$ 
instead of $a,b,c,d$, respectively, we obtain 
$2-u+v
=(1-a)(1-b)+(1-c)(1-d)
\leq\frac{1}{4}(4-u)^2
=\frac{1}{4}(16-8u+u^2)$, which yields
$v\leq 2-u+\frac{u^2}{4}$. This proves the second inequality 
in \eqref{e21865}. 

The expectation of $R$ is $\frac{u}{2}$ and the
variance of $R$ satisfies
\begin{align*}
\sigma^2
&=w+2v-\frac{u^2}{4}
=\frac{u}{2}+v-\frac{u^2}{4}\\
&\leq \frac{u}{2}
+\min\Big\{\frac{u^2}{4},2-u+\frac{u^2}{4}\Big\}-\frac{u^2}{4}
\leq\min\Big\{\frac{u}{2},2-\frac{u}{2}\Big\}
\leq1.
\end{align*}
Further, the inequalities in \eqref{e87165386} easily follow 
from \eqref{e1862518} and \eqref{e21865}. 

Let us now show the first inequality in \eqref{e287211}. 
If $u\leq 2$ then $\frac{u^2}{4}\leq \frac{u}{2}$.
If $u\geq2$ then $2-u+\frac{u^2}{4}\leq\frac{u}{2}$. 
Hence, from \eqref{e21865}, we get
$v\leq \min\{\frac{u}{2},\frac{u^2}{4}\}$. 
Therefore
$\frac{\sigma^2}{2}
=\frac{1}{2}(\frac{u}{2}+v-\frac{u^2}{4})
\leq \frac{1}{2}(u-\frac{u^2}{4})
\leq \frac{u-v}{2}$. 
From \eqref{e21865}, we obtain 
$-\frac{u^2}{2}+3u-4
=\frac{1}{2}(u-2)(4-u)\leq \max\{u-2,0\}\leq v$,
which implies that $\frac{u}{2}\leq 2-u+\frac{v}{2}+\frac{u^2}{4}$.
Therefore, 
$\frac{\sigma^2}{2}
=\frac{1}{2}(\frac{u}{2}+v-\frac{u^2}{4})
\leq \frac{1}{2}(
2-u+\frac{v}{2}+v)
\leq 1-\frac{u}{2}+v$.
For $u\leq 2$, we have 
$\frac{u^2}{4}
\leq1-\frac{u}{4}+\frac{u^2}{8}$. For $u\geq2$, we have 
$2-u+\frac{u^2}{4}
\leq1-\frac{u}{4}+\frac{u^2}{8}$. 
From \eqref{e21865}, we get that 
$v\leq 1-\frac{u}{4}+\frac{u^2}{8}$. 
Therefore 
\begin{align*}
\frac{\sigma^2}{2}
=\frac{1}{2}\Big(\frac{u}{2}+2v-v-\frac{u^2}{4}\Big)
\leq
\frac{1}{2}
\Big(\frac{u}{2}+2\Big(1-\frac{u}{4}+\frac{u^2}{8}\Big)
-v-\frac{u^2}{4}\Big)
=1-\frac{v}{2}. 
\end{align*}
This shows the first inequality in \eqref{e287211}. 
For the second one, we show that 
\begin{align*}
\min\Big\{\frac{u-v}{2},1-\frac{u}{2}+v,1-\frac{v}{2}\Big\}
\leq 2\sigma^2=u+2v-\frac{u^2}{2}
\end{align*}
or equivalently
\begin{align*}
\min\Big\{\frac{u}{2}-\frac{5}{2}v,1-\frac{u}{2}-v,1-\frac{5}{2}v\Big\}
\leq u\Big(1-\frac{u}{2}\Big).
\end{align*}
Using \eqref{e21865},  we obtain  
\begin{align}
\lefteqn{\min\Big\{\frac{u}{2}-\frac{5}{2}v,
1-\frac{u}{2}-v,1-\frac{5}{2}v\Big\}}\nonumber\\
&\leq \min\Big\{\frac{u}{2}-\frac{5}{2}\max\{u-2,0\},
1-\frac{u}{2}-\max\{u-2,0\},1-\frac{5}{2}\max\{u-2,0\}\Big\}.
\label{e927292}
\end{align}
If $u\leq 2$, then the right-hand side in \eqref{e927292} 
is equal to $\min\{\frac{u}{2},1-\frac{u}{2}\}
\leq u(1-\frac{u}{2})$.
If $u\geq 2$, then it is equal to
\begin{align*}
\min\Big\{\frac{u}{2}-\frac{5}{2}(u-2),
1-\frac{u}{2}-u+2,1-\frac{5}{2}(u-2)\Big\} 
&=\min\Big\{5-2u,3-\frac{3}{2}u,6-\frac{5}{2}u\Big\}\\
&=\min\Big\{3-\frac{3}{2}u,6-\frac{5}{2}u\Big\}
\leq u\Big(1-\frac{u}{2}\Big).
\end{align*}
This shows the second inequality in \eqref{e287211} and 
completes the proof of the lemma. 
\end{proof}
\begin{remark}
The second inequality in \eqref{e287211} is  better than 
the inequality $1-\conc{R}{0}\leq 4\sigma^2$, which is
a simple consequence of the Chebyshev inequality, 
see \citet[Proposition 1.6.1 on page 27]{MR0331448}.
\end{remark}

The following corollary shows that in 
\eqref{e87165386} and \eqref{e287211}  sometimes equality holds. 

\begin{corollary}
Let the assumptions of Lemma \ref{l2320843} hold. If 
$a,b,c,d\in\{0,1\}$, then 
$(u,v)\in\{(0,0),(1,0),(2,0),(2,1),(3,1),(4,2)\}$ and the 
values of $1-\conc{R}{0}$, $\sigma^2$, $1-\dtv{R}{\delta_1*R}$ 
and $w$ depending on $(u,v)$ are given in the following table: 
\begin{table}[H] \centering 
\begin{tabular}{c||c|c|c|c|c|c}
$(u,v)$ & $(0,0)$ & $(1,0)$ & $(2,0)$ & $(2,1)$ & $(3,1)$ & $(4,2)$ \\ 
\hline \hline

$1-\dtv{R}{\dirac_1*R}$ & $0$ & $1/2$ & $0$ & $0$ & $1/2$ & $0$ \\ 
\hline

$w$ & $0$ & $1/2$ & $1$ & $0$ & $1/2$ & $0$ \\  \hline

$1-\conc{R}{0}$ & $0$ & $1/2$ & $0$ & $1/2$ & $1/2$ & $0$ \\ \hline
$\sigma^2$ & $0$ & $1/4$ & $0$ & $1$ & $1/4$ & $0$ \\  

\end{tabular}    
\end{table}
\noindent
In particular, in \eqref{e87165386} equality holds. 
If $(u,v)\in\{(0,0),(2,0),(2,1),(4,2)\}$, then we have
$1-\conc{R}{0}=\frac{\sigma^2}{2}$;
if $(u,v)\in\{(0,0),(1,0),(2,0),(3,1),(4,2)\}$, then 
$1-\conc{R}{0}=2\sigma^2$.

\end{corollary}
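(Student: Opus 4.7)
The plan is to reduce the claim to a finite case analysis. Since $R=\frac{1}{2}(\Be(a)*\Be(b)+\Be(c)*\Be(d))$ is symmetric under swapping $a\leftrightarrow b$, swapping $c\leftrightarrow d$, or exchanging the two pairs, the measure $R$ depends only on the unordered pair of multisets $\{\{a,b\},\{c,d\}\}$. With $a,b,c,d\in\{0,1\}$ the available multisets for a single Bernoulli pair are $\{0,0\}$, $\{0,1\}$, $\{1,1\}$, giving exactly six unordered pairs of them, and each yields a distinct pair $(u,v)$, namely the six listed values. For instance, $\{\{0,0\},\{1,1\}\}$ produces $u=2$, $v=0+1=1$, whereas $\{\{0,1\},\{0,1\}\}$ produces $u=2$, $v=0+0=0$, which confirms that the two ``$u=2$'' rows are genuinely distinct and accounts for both.

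Next I would write down $R$ explicitly in each case using $\Be(0)*\Be(0)=\dirac_0$, $\Be(0)*\Be(1)=\dirac_1$, $\Be(1)*\Be(1)=\dirac_2$. The six resulting measures are $\dirac_0$, $\tfrac{1}{2}(\dirac_0+\dirac_1)$, $\dirac_1$, $\tfrac{1}{2}(\dirac_0+\dirac_2)$, $\tfrac{1}{2}(\dirac_1+\dirac_2)$, $\dirac_2$, taken in the order of the $(u,v)$-pairs in the table. From each $R$, the value $w=R(\{1\})$ and $\conc{R}{0}=\max_{x}R(\{x\})$ can be read off directly, the variance $\sigma^2$ follows either from the identity $\sigma^2=\frac{u}{2}+v-\frac{u^2}{4}$ of Lemma \ref{l2320843} or from the explicit $R$, and $\dtv{R}{\dirac_1*R}$ is obtained either from \eqref{e22579} or by inspecting $R$ and $\dirac_1*R$ on the support $\{0,1,2,3\}$ and summing the positive part of their difference. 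Each entry of the table is then a one-line computation.

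The final step is to compare these entries with \eqref{e87165386} and \eqref{e287211}. Since $w\in\{0,\tfrac{1}{2},1\}$ in every case, one has $\min\{w,1-w\}=\min\{2w,1-w\}$, and checking row by row that $1-\dtv{R}{\dirac_1*R}$ matches this common value establishes equality in \eqref{e87165386} throughout. For \eqref{e287211}, direct inspection of $\sigma^2$ against $1-\conc{R}{0}$ singles out the cases $(u,v)\in\{(0,0),(2,0),(2,1),(4,2)\}$ in which $1-\conc{R}{0}=\sigma^2/2$ and the cases $(u,v)\in\{(0,0),(1,0),(2,0),(3,1),(4,2)\}$ in which $1-\conc{R}{0}=2\sigma^2$. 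There is no conceptual obstacle; the only care needed is to enumerate the six equivalence classes exhaustively before doing the arithmetic, and to verify that the associated $(u,v)$ values are pairwise distinct so that the table is unambiguous.
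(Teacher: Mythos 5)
Your proposal is correct and is essentially the verification the paper leaves implicit: the paper's proof just says the corollary "easily follows from Lemma \ref{l2320843}", and your finite case analysis (six classes of $(a,b,c,d)$ up to symmetry, explicit $R$, then reading off $w$, $\sigma^2$, $\dtv{R}{\dirac_1*R}$ and $\conc{R}{0}$ and comparing with \eqref{e87165386} and \eqref{e287211}) is exactly that routine check, carried out correctly. No gaps; nothing further needed.
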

\begin{proof}
This easily follows from Lemma \ref{l2320843}. 
\end{proof}
\begin{proof}[Proof of Theorem \ref{t98374587}]
Let us prove \eqref{e184376651} with the help of 
Theorem \ref{t9117372}. 
From \eqref{e87165386}, we obtain that, 
for $(j,k),(r,s)\in\set{n}_{\neq}^2$, 
\begin{align*}
\nu(j,k,r,s)
&:=1-\dtv{R(j,k,r,s)}{\dirac_1*R(j,k,r,s)}
\geq \tau(j,k,r,s)=:\widetilde{\nu}(j,k,r,s).
\end{align*}
Let $\varphi\in\set{n}_{\neq}^n$ be arbitrary and 
$\xi_q:=\xi_q(\varphi)$ 
for $q\in\{1,2\}$
be defined as in \eqref{e82663}. 
Since $\tau(j,k,r,s)\leq\varepsilon$ for all 
$(j,k),(r,s)\in\set{n}_{\neq}^2$, we obtain
$\xi_2-\xi_1^2\leq\varepsilon\xi_1$.
Therefore, \eqref{e224523} can be applied. 
Since $\varphi$ was chosen arbitrarily, we get
\begin{align*}
\dtv{S_n}{1+S_n}
&\leq \frac{C(\frac{1}{8\varepsilon})}{\sqrt{\pi}}
\frac{1}{\sqrt{\frac{1}{8}+
\floor{\frac{m}{2}}\frac{1}{n!}\sum_{\varphi\in\set{n}_{\neq}^n}
\xi_1(\varphi)}},
\end{align*}
where 
$\frac{1}{n!}\sum_{\varphi\in\set{n}_{\neq}^n}\xi_1(\varphi)
=\chi$. 
Inequality \eqref{e18437665} is similarly shown
using Theorem \ref{t22628362} with $t=0$ and \eqref{e287211}.  
\end{proof}


{\small 
\let\oldbibliography\thebibliography
\renewcommand{\thebibliography}[1]{\oldbibliography{#1}
\setlength{\itemsep}{0.8ex plus0.8ex minus0.8ex}} 
\linespread{1}
\selectfont
\bibliography{sirds_32}
}
\end{document}